\def\namedlabel#1#2{\begingroup
    #2%
    \def\@currentlabel{#2}%
    \phantomsection\label{#1}\endgroup}
\newtheorem{Theorem}{Theorem}[section]
\newtheorem{Lemma}{Lemma}[section]
\newtheorem{lemma}{Lemma}
\newtheorem{definition}{Definition}
\newtheorem{proposition}{Proposition}[section]
\definecolor{darkred}{rgb}{1, 0.1, 0.3}
\definecolor{darkblue}{rgb}{0.1, 0.1, 1}
\definecolor{darkgreen}{rgb}{0,0.6,0.5}
\renewcommand{\H}{\mathcal{H}}
\def\namedlabel#1#2{\begingroup
    #2%
    \def\@currentlabel{#2}%
    \phantomsection\label{#1}\endgroup}
\begin{document}

\title{Well-posedness for integro-differential sweeping processes of Volterra type}
 
\author{Emilio Vilches\footnote{Instituto de Ciencias de la Ingenier\'ia, Universidad de O'Higgins and Center for Mathematical Modeling, Universidad de Chile, Chile. Email: emilio.vilches@uoh.cl.} }


\maketitle

\begin{abstract}
In this paper, we study the well-posedness of integro-differential sweeping processes of Volterra type. Using new  enhanced versions of Gronwall's inequality, a reparametrization technique, and a fixed point argument for history-dependent operators, we obtain the existence of solutions and provide a fully continuous dependence result for the integro-differential sweeping process.  The paper ends with an application to projected dynamical systems.
\end{abstract}

\section{Introduction}
Given a real Hilbert space $\H$, the Moreau's sweeping process is a differential inclusion involving the normal cone to a family of closed moving sets. Since its introduction by J.J. Moreau in \cite{MO1,MO2}, the sweeping process has been an important mathematical framework for the modeling for several problems in contact mechanics, electrical circuits, and crowd motion, among others (see, e.g., \cite{Acary-Bon-Bro-2011,Brogliato-M,Maury-Venel}). Moreover, so far, there is a well-developed existence theory for extensive classes of nonconvex sets.

In this paper, we are interested in a new variant of the sweeping process called the integro-differential sweeping process of Volterra type. In its general form, it is the following differential inclusion:
\begin{equation}\label{SP1}
\left\{
\begin{aligned}
\dot{x}(t)& \in -N_{C(t)}(x(t))+f(t,x(t))+\int_{T_0}^t g(t,s,x(s))ds &  \textrm{ a.e. } t\in I,\\
x(T_0)&=x_0\in C(T_0),
\end{aligned}
\right.
\end{equation}
where $C(t)$ is a nonempty, closed and $\rho$-uniformly prox-regular set for all $t\in I$ and $f\colon I\times \H \to \H$ and $g\colon I\times I\to \H$ are functions satisfying assumptions \ref{Hf} and \ref{Hg} below (see Section \ref{sec2}).  The integro-differential sweeping process \eqref{SP1} is a natural extension of the sweeping process. It was introduced by Brenier, Gangbo, Savar\'e and Westdickenberg \cite{MR3039208} to describe a one-dimensional compressible  fluid under the influence of an endogenous force field. Then, Colombo and Kozaily \cite{MR4099068} obtain the existence of solutions for a particular version of \eqref{SP1} by applying the Moreau-Yosida regularization. A great step was given by Bouach, Haddad and Thibault \cite{MR4492538,MR4421896}, who develop a complete existence theory  for the  integro-differential sweeping process using an appropriate catching-up algorithm.

In this paper, we show that the well-posedness for the integro-differential sweeping process can be obtained through a reparametrization technique and a fixed point argument for history-dependent operators. 
Moreover, we provide a fully continuous dependence result with respect to the data of the problem.  Our approach is based in some enhanced version of the Gronwall's inequality. 

The paper is organized as follows. After some mathematical preliminaries, in Section \ref{sec1}, we prove two new versions of Gronwall's inequality, which are used, in Section \ref{sec2},  to obtain the existence of solutions for the integro-differential sweeping process. Our approach is based on a fixed point argument. In Section \ref{sec3}, we provide a fully continuous dependence result for the integro-differential sweeping process.  The paper ends with an application to projected dynamical systems.

\section{Mathematical Preliminaries}\label{sec0}
Let $\mathcal{H}$ be a real Hilbert space endowed with an inner product $\langle\cdot,\cdot\rangle$ and associated norm $\Vert \cdot \Vert$. As usual, $\mathbb{R}$ will denote the set of real numbers and  $\mathbb{R}+$ the set of nonnegative reals, that is, $\mathbb{R}_+:=[0,+\infty[$. It will be convenient to use the notation $I_=[T_0,T]$ for $-\infty<T_0<T<+\infty$.

Given a set $S\subset \H$ and $x\in S$. We say that $v$ belongs to the \emph{proximal normal cone} $N_S^P(x)$ if there exists $\sigma\geq 0$ such that
$$
\langle v,y-x\rangle \leq \sigma \Vert y-x\Vert^2 \textrm{ for all } y\in S.
$$
If $x\notin C$, we set $N_S^P(x)=\emptyset$. We refer to \cite{Clarke1998,Penot-2012} for more details and properties.

Now, we recall the concept of uniformly prox-regular sets. Introduced by Federer in the finite-dimensional setting (see \cite{MR110078}) and later developed by Rockafellar, Poliquin, and Thibault in \cite{MR1694378},   the prox-regularity generalizes and unifies convex sets and nonconvex bodies with $C^2$ boundary. We refer to \cite{MR2768810,Thibault-2023-II} for a survey.
\begin{definition}
    Let $S$ be a closed subset of $\H$ and $\rho\in ]0,+\infty]$. The set $S$ is called $\rho$-uniformly prox-regular if for all $x\in S$ and $\zeta\in N^P(S;x)$ one has
    \begin{equation*}
        \langle \zeta,x'-x\rangle\leq \frac{\|\zeta\|}{2\rho}\|x'-x\|^2 \textrm{ for all } x'\in S.
    \end{equation*}
\end{definition}
\noindent It is important to emphasize that convex sets are $\rho$-uniformly prox-regular for any $\rho>0$. 

The following result provides a characterization of uniformly prox-regular sets (see, e.g., \cite[Theorem~16]{MR2768810}).
\begin{proposition}\label{prox-regularity} Let $S$ be a closed subset of $\H$ and $\rho\in ]0,+\infty]$. Then, the following assertions are equivalent.
\begin{itemize}
\item[(a)] The set $S$ is $\rho$-uniformly prox-regular.
\item[(b)] For any $x_i\in S$, $v_i\in N^P(S;x_i)\cap \mathbb{B}$ with $i=1,2$ one has
$$
\langle v_1-v_2,x_1-x_2\rangle \geq -\frac{1}{\rho}\Vert x_1-x_2\Vert^2,
$$
that is, the set-valued mapping $N^P(S;\cdot)\cap\mathbb{B}$ is $1/\rho$-hypomonotone.
\end{itemize}
\end{proposition}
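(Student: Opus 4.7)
The proposition claims the equivalence of a pointwise quadratic estimate on proximal normals (a) with a one-sided Lipschitz-type inequality (hypomonotonicity) for the truncated proximal normal cone (b). My plan is to handle the two directions separately: (a)$\Rightarrow$(b) is essentially a two-line calculation, while (b)$\Rightarrow$(a) requires a geometric argument to recover the factor $1/(2\rho)$ from the weaker constant $1/\rho$ that appears in the hypomonotonicity inequality.

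For the direction (a)$\Rightarrow$(b), I would apply the prox-regularity inequality twice. Given $x_i\in S$ and $v_i\in N^P(S;x_i)\cap\mathbb{B}$, I would write
\begin{equation*}
\langle v_1,x_2-x_1\rangle \le \frac{\|v_1\|}{2\rho}\|x_2-x_1\|^2 \le \frac{1}{2\rho}\|x_1-x_2\|^2,
\end{equation*}
and symmetrically for $v_2$. Adding the two estimates and rearranging gives
$\langle v_1-v_2,x_1-x_2\rangle\ge -\tfrac{1}{\rho}\|x_1-x_2\|^2$, which is exactly (b).

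For the direction (b)$\Rightarrow$(a), the natural approach is to convert the hypomonotonicity of $N^P(S;\cdot)\cap\mathbb{B}$ into a statement about the metric projection onto $S$. Given $x\in S$ and $\zeta\in N^P(S;x)$ with $\zeta\neq 0$, I would normalize to $\|\zeta\|=1$ and aim to prove that $x\in P_S(x+t\zeta)$ for every $t\in[0,\rho)$. Expanding $\|x+t\zeta-x'\|^2\ge t^2$ for $x'\in S$ gives precisely $\langle\zeta,x'-x\rangle\le \frac{1}{2t}\|x-x'\|^2$, and letting $t\uparrow\rho$ yields the prox-regularity bound; restoring the general $\zeta$ by scaling then produces the factor $\|\zeta\|/(2\rho)$. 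To establish the projection property, I would use a continuity/connectedness argument along the ray $t\mapsto x+t\zeta$: the set of $t\in[0,\rho)$ for which $x$ is a nearest point is nonempty (contains $0$), closed, and, using the $1/\rho$-hypomonotonicity of $N^P(S;\cdot)\cap\mathbb{B}$ together with the fact that every element of a proximal normal cone with magnitude at most $t$ still keeps $x$ in the projection, is also open. Hence it is the whole interval $[0,\rho)$.

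The main obstacle is indeed this last step: turning the one-sided hypomonotone estimate (with constant $1/\rho$, corresponding to the \emph{half} one would naively get) into the stronger quadratic proximal bound (with $1/(2\rho)$). A straightforward choice of $v_2=0$ in (b) only yields the inequality $\langle\zeta,x'-x\rangle\le\tfrac{1}{\rho}\|x-x'\|^2$, which is weaker than (a) by a factor of two. Recovering the sharp constant inevitably requires the projection/integration detour sketched above rather than a purely algebraic manipulation of the defining inequality. Once that projection property is secured, the remainder of the argument is routine.
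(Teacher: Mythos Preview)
The paper does not prove Proposition~\ref{prox-regularity}; it quotes it as a known characterization and refers to \cite[Theorem~16]{MR2768810} (see also \cite{MR1694378}). There is therefore no in-paper argument to compare against, and your proposal already goes further than the paper does.

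On the substance of your attempt: the direction (a)$\Rightarrow$(b) is correct and complete. For (b)$\Rightarrow$(a) your strategy---reduce to showing $x\in P_S(x+t\zeta)$ for every $t\in[0,\rho)$ via a connectedness argument on the ray---is indeed the standard route and would succeed, but the decisive step, openness of the set of admissible $t$, is only asserted in your write-up. What actually has to be shown is this: if $T:=\sup\{t\ge 0:\ d_S(x+t\zeta)=t\}<\rho$, take $t_n\downarrow T$ with $t_n>T$ and nearest points $x_n\in P_S(x+t_n\zeta)$, and set $v_n:=(x+t_n\zeta-x_n)/t_n\in N^P(S;x_n)\cap\mathbb{B}$. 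Applying (b) with $(v_1,x_1)=(\zeta,x)$ and $(v_2,x_2)=(v_n,x_n)$ and noting that $v_1-v_2=(x_n-x)/t_n$, the hypomonotonicity inequality reads $-\tfrac{1}{t_n}\|x-x_n\|^2\ge -\tfrac{1}{\rho}\|x-x_n\|^2$, which forces $t_n\ge\rho$ whenever $x_n\neq x$, a contradiction. This computation is the mechanism your sentence ``using the $1/\rho$-hypomonotonicity \dots\ is also open'' is pointing to, but it is not visible in the proposal. In an infinite-dimensional Hilbert space you must also justify the existence of the nearest points $x_n$ (closedness of $S$ alone does not guarantee it), a point the references above handle with some care and which your sketch does not address.
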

It is well-known that the proximal normal cone coincides with Fr\'echet, Limiting, and Clarke normal cones (see \cite{MR2768810}). Hence, we simply denote the proximal normal cone to a prox-regular set $S$ as $N(S;\cdot)=N_S(\cdot)$. 

\begin{lemma}\label{lemma-derivative}
Let $w_1, w_2\colon I\to \H$ be two absolutely continuous functions such that
$$
\frac{1}{2}\frac{d}{dt}\Vert w_1(t)-w_2(t)\Vert^2 \leq \alpha(t)\Vert w_1(t)-w_2(t)\Vert^2+\beta(t)\Vert w_1(t)-w_2(t)\Vert \textrm{ for a.e. } t\in I,
$$
where $\alpha, \beta\colon I\to \mathbb R$ are nonnegative functions. Then, for a.e. $t\in I$, it holds
\begin{equation}\label{normad}
\frac{d}{dt}\Vert w_1(t)-w_2(t)\Vert \leq \alpha(t)\Vert w_1(t)-w_2(t)\Vert+\beta(t).
\end{equation}
\end{lemma}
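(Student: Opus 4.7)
The plan is to set $\varphi(t) := \Vert w_1(t)-w_2(t)\Vert$ and to establish inequality \eqref{normad} pointwise at almost every $t\in I$, splitting the analysis according to whether $\varphi(t)$ vanishes. Since $w_1,w_2$ are absolutely continuous with values in $\H$, so is $t \mapsto w_1(t)-w_2(t)$, and composition with the $1$-Lipschitz map $\Vert\cdot\Vert$ shows $\varphi\colon I\to\mathbb R$ is absolutely continuous. The squared norm $\varphi^2$, being the product of two absolutely continuous real-valued functions on the compact interval $I$, is also absolutely continuous. Thus both $\varphi$ and $\varphi^2$ are differentiable a.e., and at any point of common differentiability the elementary chain rule gives $(\varphi^2)'(t) = 2\varphi(t)\varphi'(t)$.

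On the set $A := \{t\in I : \varphi(t)>0\}$, I would argue as follows: at any $t\in A$ where both $\varphi$ and $\varphi^2$ are differentiable, the hypothesis rewrites as $\varphi(t)\varphi'(t) \le \alpha(t)\varphi(t)^2 + \beta(t)\varphi(t)$, and division by the strictly positive quantity $\varphi(t)$ yields exactly \eqref{normad}.

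The only genuine subtlety is on $E := \{t\in I : \varphi(t) = 0\}$, where the naive division by $\varphi(t)$ fails. The key observation is that if $\varphi\geq 0$ is differentiable at a point $t_0$ with $\varphi(t_0)=0$, then $t_0$ is a global minimum of $\varphi$, whence $\varphi'(t_0) = 0$. Applying this at every $t\in E$ at which $\varphi$ is differentiable (a set of full measure within $E$), one gets $\varphi'(t) = 0$; since $\alpha,\beta\ge 0$, the right-hand side of \eqref{normad} is nonnegative, and the inequality holds trivially.

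Combining these two cases covers a.e.\ $t\in I$, which yields \eqref{normad}. The main obstacle, therefore, is not any computation but the correct handling of the zero set $E$: one must invoke the minimum-principle observation above rather than trying to divide the squared inequality through by $\varphi(t)$, which would be illegal precisely on $E$.
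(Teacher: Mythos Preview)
Your proof is correct and follows essentially the same approach as the paper: both arguments split according to whether $\varphi(t):=\Vert w_1(t)-w_2(t)\Vert$ is positive or zero, divide the squared inequality by $\varphi(t)$ on the former set, and invoke the minimum principle $\varphi'(t)=0$ (together with $\alpha,\beta\ge 0$) on the latter. Your version is in fact slightly more careful in justifying the absolute continuity of $\varphi$ and $\varphi^2$ and the validity of the chain rule.
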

\begin{proof} Let $h(t):=\Vert w_1(t)-w_2(t)\Vert$ and let $\Omega$ be the set of full measure in which  $\dot{h}$ exists.  Let us consider the sets
$$
\Omega_1:=\{ t\in \Omega \colon w_1(t)\neq w_2(t)\} \textrm{ and } \Omega_2:=\{ t\in \Omega\colon w_1(t)= w_2(t)\}.$$
On the one hand, for  $t\in \Omega_1$, one has
$$
\frac{1}{2}\frac{d}{dt}\Vert w_1(t)-w_2(t)\Vert^2=\Vert w_1(t)-w_2(t)\Vert \frac{d}{dt}\Vert w_1(t)-w_2(t)\Vert,
$$
which implies that \eqref{normad} holds for $t\in \Omega_1$. On the other hand, for any $t\in \Omega_2$, the map $t\mapsto h(t)$ attains a minimum at this point. Thus, for $t\in \Omega_2$, we have
$$\frac{d}{dt}\Vert x_1(t)-x_2(t)\Vert =0,$$
which implies that \eqref{normad} holds for $t\in \Omega_2$. The proof is then complete.
\end{proof}

\noindent  Given a closed interval $J\subset \mathbb{R}$, we denote by $C(J;\H)$ the space of continuous mappings from $J$ to $\H$ equipped with the supremum norm $\Vert \cdot\Vert_{\infty}$. We end this section with a fixed point theorem for history-dependent operators. We refer to \cite[p.~41-45]{MR3752610} for a proof.
\begin{proposition}\label{History}
Let $\mathcal{F}\colon C([0,T];\H)\to C([0,T];\H)$ be a history-dependent operator, i.e., there exists $L_{\mathcal{F}} \geq 0$ such that
$$
\Vert \mathcal{F}(y_1)(t)-\mathcal{F}(y_2)(t)\Vert \leq L_{\mathcal{F}}\int_{0}^t \Vert y_1(s)-y_2(s)\Vert ds \textrm{ for all } y_1, y_2 \in C([0,T];\H) \textrm{ and } t\in [0,T].
$$
Then, $\mathcal{F}$ has a unique fixed point, i.e., there exists a unique element $y^{\ast}\in C([0,T];\H)$ such that $\mathcal{F}(y^{\ast})=y^{\ast}$.
\end{proposition}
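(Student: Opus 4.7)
The plan is to prove the proposition via a standard iteration argument, showing that some power of $\mathcal{F}$ becomes a contraction on the complete metric space $(C([0,T];\H),\Vert\cdot\Vert_{\infty})$, so that the classical Banach fixed point theorem (applied to an iterate) yields existence and uniqueness of the fixed point.

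The first step is to establish, by induction on $n\geq 1$, the pointwise estimate
\begin{equation*}
\Vert \mathcal{F}^n(y_1)(t)-\mathcal{F}^n(y_2)(t)\Vert \leq \frac{(L_{\mathcal{F}}t)^n}{n!}\Vert y_1-y_2\Vert_{\infty}
\quad\textrm{for all } t\in[0,T],
\end{equation*}
valid for every $y_1,y_2\in C([0,T];\H)$. The base case $n=1$ follows directly from the history-dependent hypothesis by bounding $\Vert y_1(s)-y_2(s)\Vert$ by $\Vert y_1-y_2\Vert_{\infty}$ inside the integral. For the inductive step, I would apply the history-dependent inequality to the pair $\mathcal{F}^{n-1}(y_1),\mathcal{F}^{n-1}(y_2)$, substitute the inductive bound inside the integral, and compute $\int_0^t \frac{(L_{\mathcal{F}}s)^{n-1}}{(n-1)!}\,ds = \frac{(L_{\mathcal{F}}t)^n}{n!\,L_{\mathcal{F}}}$.

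Taking the supremum over $t\in[0,T]$ yields
\begin{equation*}
\Vert \mathcal{F}^n(y_1)-\mathcal{F}^n(y_2)\Vert_{\infty} \leq \frac{(L_{\mathcal{F}}T)^n}{n!}\Vert y_1-y_2\Vert_{\infty}.
\end{equation*}
Since $(L_{\mathcal{F}}T)^n/n!\to 0$ as $n\to\infty$, there exists $n_0$ with $(L_{\mathcal{F}}T)^{n_0}/n_0!<1$, so that $\mathcal{F}^{n_0}$ is a strict contraction on $C([0,T];\H)$. The space being complete, $\mathcal{F}^{n_0}$ admits a unique fixed point $y^{\ast}$ by Banach's theorem. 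A standard argument then shows that $y^{\ast}$ is also the unique fixed point of $\mathcal{F}$: indeed, $\mathcal{F}^{n_0}(\mathcal{F}(y^{\ast}))=\mathcal{F}(\mathcal{F}^{n_0}(y^{\ast}))=\mathcal{F}(y^{\ast})$, so $\mathcal{F}(y^{\ast})$ is a fixed point of $\mathcal{F}^{n_0}$ and uniqueness forces $\mathcal{F}(y^{\ast})=y^{\ast}$; conversely any fixed point of $\mathcal{F}$ is trivially a fixed point of $\mathcal{F}^{n_0}$.

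I do not anticipate a genuine obstacle here; the only point that requires a bit of care is the inductive step, where one must be careful to apply the hypothesis to $\mathcal{F}^{n-1}(y_1),\mathcal{F}^{n-1}(y_2)$ rather than to $y_1,y_2$, and then insert the inductive estimate under the integral sign. An alternative route, equally short, would replace the iteration argument by a Bielecki-type weighted norm $\Vert y\Vert_{\lambda}:=\sup_{t\in[0,T]}e^{-\lambda t}\Vert y(t)\Vert$ for $\lambda>L_{\mathcal{F}}$, under which $\mathcal{F}$ itself becomes a contraction; but the iteration approach has the advantage of producing the explicit factorial decay, which is also the source of the enhanced Gronwall estimates used elsewhere in the paper.
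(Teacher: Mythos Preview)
Your proof is correct and entirely standard. Note, however, that the paper does not supply its own proof of this proposition: it simply refers the reader to \cite[p.~41--45]{MR3752610}. The argument you give---iterating the history-dependent estimate to obtain the factorial bound $\Vert \mathcal{F}^n(y_1)-\mathcal{F}^n(y_2)\Vert_\infty\leq \frac{(L_{\mathcal{F}}T)^n}{n!}\Vert y_1-y_2\Vert_\infty$ and then applying Banach's theorem to a sufficiently high iterate---is precisely the classical proof found in that reference, so there is no substantive difference to report.
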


\section{Enhanced Gronwall's inequalities}\label{sec1}
In this section we present two new versions of Gronwall's inequality. We will see later that these two inequalities play a key role in obtaining the existence of solutions for integro-differential sweeping processes of Volterra type.
\begin{Theorem}[Enhanced Gronwall's Inequality I]\label{TeoI}
Let $I:=[T_0,T]$, and let $\rho\colon I \to \mathbb{R}$ be a nonnegative absolutely continuous function. Let $K_1, K_2, \varepsilon \colon I \to \mathbb{R}_+$, and  $K_3 \colon I\times I \to \mathbb{R}_+$ be nonnegative measurable functions such 
$$
t\mapsto K_1(t) \textrm{ and } t\mapsto K_2(t)\int_{T_0}^t K_3(t,s)ds \textrm{ are } integrable.
$$
Suppose that
$$
\dot{\rho}(t)\leq \varepsilon(t)+K_1(t)\rho(t)+K_2(t)\int_{T_0}^t K_3(t,s)\rho(s)ds \quad \textrm{ for a.e. } t\in I.
$$
Then,  one has
$$
\rho(t)\leq \rho(T_0)\exp\left(\int_{T_0}^t \gamma(s)ds\right)+\int_{T_0}^t \varepsilon(s)\exp\left(\int_s^t \gamma(\tau)d\tau\right)ds \quad \textrm{ for all }t\in I,
$$
where  $\gamma(t):=K_1(t)+K_2(t)\int_{T_0}^t K_3(t,s)ds$.
\end{Theorem}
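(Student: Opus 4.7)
The plan is to tame the Volterra term by passing to the non-decreasing majorant $M(t):=\sup_{\tau\in[T_0,t]}\rho(\tau)$, collapsing the double-integral contribution into the linear term $\gamma(t)M(t)$, and then finishing with the standard integrating-factor form of Gronwall's inequality for a linear differential inequality.

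First I would bound the Volterra term using $\rho(s)\leq M(t)$ for all $s\in[T_0,t]$, together with $\rho(t)\leq M(t)$, to get, a.e.\ on $I$,
$$
\dot\rho(t)\leq \varepsilon(t)+K_1(t)M(t)+K_2(t)M(t)\int_{T_0}^t K_3(t,s)\,ds=\varepsilon(t)+\gamma(t)M(t).
$$
Since $\rho$ is absolutely continuous, integration on $[T_0,t]$ yields $\rho(t)\leq u(t)$, where
$$
u(t):=\rho(T_0)+\int_{T_0}^t \varepsilon(s)\,ds+\int_{T_0}^t \gamma(s)M(s)\,ds.
$$
The function $u$ is non-decreasing (since $\varepsilon,\gamma,M\geq 0$), so from $\rho(t)\leq u(t)$ I would pass to the supremum and deduce $M(t)\leq u(t)$ as well. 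Moreover, thanks to the integrability assumptions on $\varepsilon$ and $\gamma$, $u$ is absolutely continuous with
$$
\dot u(t)=\varepsilon(t)+\gamma(t)M(t)\leq \varepsilon(t)+\gamma(t)u(t)\quad\text{a.e.}\ t\in I.
$$

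Second, I would solve this linear differential inequality by the standard integrating-factor trick: multiplying by $\exp\bigl(-\int_{T_0}^t\gamma(\tau)\,d\tau\bigr)$ gives
$$
\frac{d}{dt}\!\left(u(t)\exp\!\left(-\int_{T_0}^t\gamma(\tau)\,d\tau\right)\right)\leq \varepsilon(t)\exp\!\left(-\int_{T_0}^t\gamma(\tau)\,d\tau\right)\quad\text{a.e.}\ t\in I.
$$
Integrating from $T_0$ to $t$ and recalling $u(T_0)=\rho(T_0)$ leads, after rearrangement, to
$$
u(t)\leq \rho(T_0)\exp\!\left(\int_{T_0}^t \gamma(\tau)\,d\tau\right)+\int_{T_0}^t \varepsilon(s)\exp\!\left(\int_s^t \gamma(\tau)\,d\tau\right)ds,
$$
and the conclusion follows from $\rho(t)\leq u(t)$.

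The main obstacle is really only conceptual: one has to recognize that replacing $\rho(t)$ and $\rho(s)$ by the common majorant $M(t)$ is harmless and reduces the Volterra-type inequality to a genuinely linear one governed by $\gamma$. Once this reduction is in hand, absolute continuity of $u$ and the integrability hypotheses on $K_1$ and on $t\mapsto K_2(t)\int_{T_0}^t K_3(t,s)\,ds$ give $\gamma\in L^1(I)$, which is exactly what is needed to justify the integrating factor and to conclude.
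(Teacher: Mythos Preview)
Your argument is correct and follows essentially the same route as the paper: pass to a nondecreasing majorant so that the Volterra term collapses into $\gamma(t)$ times the majorant, then apply the classical linear Gronwall inequality. The only cosmetic difference is that the paper works with a single auxiliary function $\vartheta(t):=\rho(T_0)+\int_{T_0}^t\varepsilon+\int_{T_0}^t K_1\rho+\int_{T_0}^t K_2\!\int K_3\rho$ (the integral of the right-hand side) and uses its monotonicity directly, whereas you interpose the running supremum $M$ before building $u$; both reductions lead to the same differential inequality $\dot u\le\varepsilon+\gamma u$ and the same conclusion.
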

\begin{proof} Let us consider the non-decreasing function:
$$
\vartheta(t):=\rho(T_0)+\int_{T_0}^t \varepsilon(s)ds+\int_{T_0}^tK_1(s)\rho(s)ds+\int_{T_0}^tK_2(s)\int_{T_0}^{s} K_3(s,\tau)\rho(\tau)d\tau ds.
$$
Then, by assumption, $\rho(t)\leq  \vartheta(t)$ for all $t\in I$. Therefore, for a.e. $t\in I$, one has
\begin{equation*}
\begin{aligned}
\dot{\vartheta}(t)&=  \varepsilon(t)+K_1(t)\rho(t)+K_2(t)\int_{T_0}^t K_3(t,s)\rho(s)ds\\
&\leq  \varepsilon(t)+(K_1(t)+K_2(t)\int_{T_0}^t K_3(t,s)ds)\vartheta(t)\\
&=\varepsilon(t)+\gamma(t)\vartheta(t),
\end{aligned}
\end{equation*}
where we have used that  $\rho(t)\leq  \vartheta(t)$ for all $t\in I$ and the function $t \mapsto \vartheta(t)$ is nondecreasing. Hence, by applying the classical Gronwall's inequality we get that 
$$
\vartheta(t)\leq \vartheta(T_0)\exp\left(\int_{T_0}^t \gamma(s)ds\right)+\int_{T_0}^t \varepsilon(s)\exp\left(\int_s^t \gamma(\tau)d\tau\right)ds \quad \textrm{ for all }t\in I.
$$
Finally, by using that $\vartheta(T_0)=\rho(T_0)$ and $\rho(t)\leq \vartheta(t)$ for all $t\in I$, we obtain the result.
\end{proof}

\begin{Theorem}[Enhanced Gronwall's Inequality II]\label{TeoII}
Let $I:=[T_0,T]$, and let $\rho\colon I \to \mathbb{R}$ be a nonnegative absolutely continuous function.  Let $\varepsilon, K_1, K_2, K_3, K_4 \colon I \to \mathbb{R}_+$ be nonnegative measurable functions such 
$$
t\mapsto \varepsilon(t), t\mapsto K_1(t), t\mapsto K_2(t) \textrm{ and } t\mapsto K_3(t)\int_{T_0}^t K_4(s)ds \textrm{ are } integrable.
$$
Suppose that for a.e. $t\in I$ one has
$$
\dot{\rho}(t)\leq \varepsilon(t)+K_1(t)\sqrt{\rho(t)}+K_2(t)\rho(t)+K_3(t)\sqrt{\rho(t)}\int_{T_0}^t K_4(s)\sqrt{\rho(s)}ds.
$$
Then, the following assertions hold:
\begin{itemize}
\item[(a)] For all $t\in I$, one has 
$$
\rho(t)\leq \rho(T_0)\exp\left(\int_{T_0}^t (\gamma(s)+K_1(s))ds\right)+\int_{T_0}^t (\varepsilon(s)+K_1(s))\exp\left(\int_s^t (\gamma(\tau)+K_1(\tau))d\tau\right)ds,
$$
where $\gamma(t):=(K_2(t)+K_3(t)\int_{T_0}^t K_4(s)ds)$. 
\item[(b)] If, in addition $\varepsilon(t)\equiv 0$, then for all $t\in I$, one has 
$$
\sqrt{\rho(t)}\leq \sqrt{\rho(T_0)}\exp\left(\frac{1}{2}\int_{T_0}^t \gamma(s)ds\right)+\frac{1}{2}\int_{T_0}^t K_1(s)\exp\left(\frac{1}{2}\int_s^t \gamma(\tau)d\tau\right)ds.
$$
\end{itemize}

\end{Theorem}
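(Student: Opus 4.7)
The strategy for both parts is to reduce the inequality to the form required by Theorem~\ref{TeoI}. In part~(a) this is done by elementary pointwise inequalities that eliminate the square roots; in part~(b) the substitution $u=\sqrt{\rho}$ plays that role, but must be handled through a regularization because $\sqrt{\rho}$ need not be absolutely continuous when $\rho$ has zeros.

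For part~(a) I would use two bounds: $\sqrt{x}\le 1+x$ for every $x\ge 0$ and the AM--GM inequality $\sqrt{\rho(t)\rho(s)}\le\tfrac{1}{2}(\rho(t)+\rho(s))$. The first gives $K_1(t)\sqrt{\rho(t)}\le K_1(t)+K_1(t)\rho(t)$, and the second yields
\[
K_3(t)\sqrt{\rho(t)}\int_{T_0}^{t}K_4(s)\sqrt{\rho(s)}\,ds\le \tfrac{1}{2}K_3(t)\rho(t)\int_{T_0}^{t}K_4(s)\,ds+\tfrac{1}{2}K_3(t)\int_{T_0}^{t}K_4(s)\rho(s)\,ds.
\]
Substituting both estimates into the hypothesis puts the inequality in the form required by Theorem~\ref{TeoI}, with new data
\[
\widetilde\varepsilon=\varepsilon+K_1,\quad \widetilde K_1=K_1+K_2+\tfrac{1}{2}K_3\!\int_{T_0}^{\cdot}\!K_4\,ds,\quad \widetilde K_2=\tfrac{1}{2}K_3,\quad \widetilde K_3(t,s)=K_4(s).
\]
A short computation shows the associated $\widetilde\gamma=\widetilde K_1+\widetilde K_2\!\int_{T_0}^{\cdot}\!\widetilde K_3\,ds$ equals $\gamma+K_1$, and the integrability assumptions transfer immediately from those on $K_1$, $K_2$, and $K_3\!\int_{T_0}^{\cdot}\!K_4\,ds$. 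Theorem~\ref{TeoI} then delivers exactly the stated bound.

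For part~(b) (so $\varepsilon\equiv 0$), I would formally set $u=\sqrt{\rho}$ and use $\dot\rho=2u\dot u$ to divide by $2u$, producing a Theorem~\ref{TeoI} type inequality for $u$. To justify the manipulation at points where $\rho$ vanishes, I would regularize by $u_\eta(t):=\sqrt{\rho(t)+\eta^2}$ with $\eta>0$. This $u_\eta$ is absolutely continuous, since it is the composition of the AC function $\rho+\eta^{2}$ with the globally Lipschitz map $\sqrt{\cdot}$ on $[\eta^{2},\infty[$, and is bounded below by $\eta$. Using $\sqrt{\rho}\le u_\eta$ and $\rho\le u_\eta^{2}$ in the hypothesis and then dividing by $2u_\eta\ge 2\eta$ gives
\[
\dot u_\eta(t)\le \tfrac{1}{2}K_1(t)+\tfrac{1}{2}K_2(t)\,u_\eta(t)+\tfrac{1}{2}K_3(t)\int_{T_0}^{t}K_4(s)\,u_\eta(s)\,ds,
\]
whose associated $\widetilde\gamma$ equals $\gamma/2$. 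Theorem~\ref{TeoI} applied to $u_\eta$, followed by the passage $\eta\to 0^+$ (since $u_\eta(T_0)\to\sqrt{\rho(T_0)}$ and $u_\eta(t)\to\sqrt{\rho(t)}$ pointwise), yields the claim. The regularization together with the limit passage is the only nontrivial technical step; everything else is an algebraic match to Theorem~\ref{TeoI}.
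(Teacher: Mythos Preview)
Your argument is correct, and both parts ultimately reduce to Theorem~\ref{TeoI}, as does the paper's proof; the reductions differ, however. For part~(a) the paper introduces the nondecreasing majorant $\vartheta(t)=\rho(T_0)+\int_{T_0}^{t}\dot\rho$, uses its monotonicity to absorb the integral term $K_3\sqrt{\rho}\int K_4\sqrt{\rho}$ into $\gamma(t)\vartheta(t)$, and only then applies $\sqrt{\vartheta}\le 1+\vartheta$ to reach a linear inequality and classical Gronwall. Your route is more direct: you apply $\sqrt{x}\le 1+x$ to $\rho$ itself and handle the cross term by AM--GM, landing squarely in the hypotheses of Theorem~\ref{TeoI}; the computation $\widetilde\gamma=\gamma+K_1$ is indeed exact. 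For part~(b) the paper simply sets $\sigma=\sqrt{\rho}$ and invokes Lemma~\ref{lemma-derivative} to pass from $\dot\rho\le\cdots$ to $\dot\sigma\le\cdots$. Your regularization $u_\eta=\sqrt{\rho+\eta^2}$ achieves the same reduction but is more careful: Lemma~\ref{lemma-derivative} is stated for $h=\|w_1-w_2\|$ with $w_1,w_2$ absolutely continuous, which guarantees $h$ is AC; for a general nonnegative AC $\rho$ the square root $\sqrt{\rho}$ need not be AC, so the subsequent application of Theorem~\ref{TeoI} to $\sigma$ is not automatic. Your $\eta\to 0$ argument sidesteps this issue cleanly.
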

\begin{proof} Let us consider the non-decreasing function $\vartheta(t)$ such that $\vartheta(0)=\rho(T_0)$ and 
$$
\dot{\vartheta}(t):=\varepsilon(t)+K_1(t)\sqrt{\rho(t)}+K_2(t)\rho(t)+K_3(t)\sqrt{\rho(t)}\int_{T_0}^t K_4(s)\sqrt{\rho(s)}ds.
$$
Then, by assumption, $\rho(t)\leq  \vartheta(t)$ for all $t\in I$. Therefore, for a.e. $t\in I$, one has
\begin{equation*}
\begin{aligned}
\dot{\vartheta}(t)&=  \varepsilon(t)+K_1(t)\sqrt{\rho(t)}+K_2(t)\rho(t)+K_3(t)\sqrt{\rho(t)}\int_{T_0}^t K_4(s)\sqrt{\rho(s)}ds\\
&\leq  \varepsilon(t)+K_1(t)\sqrt{\vartheta(t)}+(K_2(t)+K_3(t)\int_{T_0}^t K_4(s)ds)\vartheta(t)\\
&=\varepsilon(t)+K_1(t)\sqrt{\vartheta(t)}+\gamma(t)\vartheta(t),
\end{aligned}
\end{equation*}
where we have used that $t \mapsto \vartheta(t)$ is nondecreasing. Moreover, by noting that $\sqrt{\vartheta(t)}\leq \vartheta(t) +1$ for all $t\in I$,  we obtain that  
\begin{equation*}
\begin{aligned}
\dot{\vartheta}(t)\leq  (\varepsilon(t)+K_1(t))+(\gamma(t)+K_1(t))\vartheta(t) \textrm{ a.e. } t\in I,
\end{aligned}
\end{equation*}
which, by the classical Gronwall's inequality,  implies that
$$
\vartheta(t)\leq \vartheta(T_0)\exp\left(\int_{T_0}^t (\gamma(t)+K_1(s))ds\right)+\int_{T_0}^t (\varepsilon(s)+K_1(t))\exp\left(\int_s^t (\gamma(\tau)+K_1(\tau))d\tau\right)ds \quad t\in I.
$$
Then, by using that $\vartheta(T_0)=\rho(T_0)$ and $\rho(t)\leq \vartheta(t)$ for $t\in I$, the first assertion is proved. To prove $(b)$, assume that $\varepsilon(t)\equiv 0$ and set $\sigma(t):=\sqrt{\rho(t)}$. Then, by virtue of Lemma \ref{lemma-derivative},  we obtain that
$$
\dot{\sigma}(t)\leq \frac{1}{2}K_1(t)+\frac{1}{2}K_2(t)\sigma(t)+\frac{1}{2}K_3(t)\int_{T_0}^t K_4(s)\sigma(s)ds.
$$
Hence, applying Theorem \ref{TeoI}, we obtain the result. 
\end{proof}

\section{Integro-differential sweeping process of Volterra type}\label{sec2}

In this section, we will use enhanced Gronwall's inequalities proved in Section \ref{sec1} to obtain the existence and uniqueness of solutions for the integro-differential sweeping process of Volterra type:  
\begin{equation}\label{SP}\tag{$\mathcal{SP}(x_0,z,f,g)$}
\left\{
\begin{aligned}
\dot{x}(t)& \in -N_{C(t)+z(t)}(x(t))+f(t,x(t))+\int_{T_0}^t g(t,s,x(s))ds &  \textrm{ a.e. } t\in I,\\
x(T_0)&=x_0\in C(T_0),
\end{aligned}
\right.
\end{equation}
where $z\colon I\to \H$ is an absolutely continuous function, $C(t)$ is a nonempty, closed and $\rho$-uniformly prox-regular set for all $t\in I$ and $f\colon I\times \H \to \H$ and $g\colon I\times I\to \H$ are functions satisfying assumptions \ref{Hf} and \ref{Hg} below.

\paragraph{Assumptions on the data:} We consider the following assumptions on the data for \eqref{SP}.
\begin{enumerate}
\item[\namedlabel{HC}{$(\mathcal{H}_C)$}] The set-valued map $C\colon I\rightrightarrows \H$ has nonempty, closed and $\rho$-uniformly prox-regular values for some constant $\rho\in ]0,+\infty]$. Moreover, there exist an absolutely continuous function $v\colon I\to \mathbb{R}$ such that  
\begin{equation*}
C(t)\subset C(s)+|v(t)-v(s)| \mathbb{B} \textrm{ for all } s,t\in I.
\end{equation*}
\end{enumerate}

\begin{enumerate}
    \item[\namedlabel{Hf}{$(\mathcal{H}^f)$}]  The function $f\colon I\times \H\to \H$ satisfies
    \begin{enumerate}
        \item For each $x\in \H$, the map $t \mapsto f(t,x)$ is measurable.
        \item For all $r>0$, there exists an integrable function $\kappa_{r}\colon I \to \mathbb{R}_+$ such that        $$
        \Vert f(t,x)-f(t,y)\Vert \leq \kappa_{r}(t)\Vert x-y\Vert \textrm{ for all }  t\in I \textrm{ and } x,y\in r\mathbb{B}.
        $$

        \item There exists a nonnegative integrable function $\beta\colon I \to \mathbb{R}$ such that 
        $$
        \Vert f(t,x)\Vert \leq \beta(t)(1+\Vert x\Vert) \textrm{ for all } t \in I \textrm{ and } x\in \H. 
        $$
    \end{enumerate}
\end{enumerate}

\begin{enumerate}
    \item[\namedlabel{Hg}{$(\mathcal{H}^g)$}]  The function $g\colon I \times I\times \H\to \H$ satisfies
    \begin{enumerate}
        \item For each $x\in \H$, the map $(t,s) \mapsto g(t,s,x)$ is measurable.
        \item For all $r>0$, there exists an integrable function $\mu_{r}\colon I\to \mathbb{R}_+$ such that for all $(t,s)\in D$ 
        $$
        \Vert g(t,s,x)-g(t,s,y)\Vert \leq \mu_{r}(t)\Vert x-y\Vert \textrm{ for all } x,y\in r\mathbb{B}.
        $$
        Here $D:=\{(t,s)\in I\times I \colon s\leq t\}$. 
        \item There exists a nonnegative integrable function $\sigma\colon D\to \mathbb{R}$ such that 
        $$
        \Vert g(t,s,x)\Vert \leq \sigma(t,s)(1+\Vert x\Vert) \textrm{ for all } (t,s)\in D \textrm{ and } x\in \H. 
        $$
    \end{enumerate}
\end{enumerate}
In order to prove the existence of solutions, we will apply a fixed point argument (see Proposition \ref{History}) to the classical sweeping process. Hence, the following result will be needed (see \cite[Proposition~1]{MR2179241}).
\begin{Lemma}\label{Lemma-1} Let $\H$ be a real Hilbert space, and suppose that $C(\cdot)$ satisfies \ref{HC}. Let $h\colon I\to \H$ be a single-valued integrable mapping. Then, for any $x_0\in C(T_0)$ there exists a unique absolutely continuous solution $x(\cdot)$ for the following differential inclusion:
\begin{equation*}
\left\{
\begin{aligned}
\dot{x}(t)&\in -N(C(t);x(t))+h(t) \quad \textrm{ a.e. } t\in I\\
x(T_0)&=x_0.
\end{aligned}
\right.
\end{equation*}
Moreover, $x(\cdot)$ satisfies the following inequality:
$$
\Vert \dot{x}(t)-h(t)\Vert \leq \Vert h(t)\Vert +\vert \dot{v}(t)\vert \quad \textrm{ a.e. } t\in I.
$$
\end{Lemma}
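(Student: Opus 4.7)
The plan is to reduce to the classical sweeping process with a Lipschitz moving set via a time reparametrization, invoke the known existence and uniqueness theory for perturbed sweeping processes on uniformly prox-regular sets, and derive the velocity bound from the proximal normal cone inequality together with the Hausdorff-type hypothesis \ref{HC}.

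\textit{Reparametrization and existence.} Define $\varphi(t) := (t - T_0) + \int_{T_0}^t |\dot v(r)|\,dr$, which is absolutely continuous, strictly increasing (since $\dot\varphi \geq 1$ a.e.), and whose inverse $\psi$ is also absolutely continuous with $\dot\psi \leq 1$. Setting $\tilde C(s) := C(\psi(s))$, assumption \ref{HC} immediately yields $\tilde C(s_1) \subset \tilde C(s_2) + |s_1 - s_2|\mathbb{B}$, so $\tilde C$ is $1$-Lipschitz in Hausdorff distance with $\rho$-uniformly prox-regular values. For a candidate $x$, put $y(s) := x(\psi(s))$ and $\tilde h(s) := \dot\psi(s)\, h(\psi(s))$ (integrable because $\dot\psi \leq 1$); the chain rule recasts the original problem as
$$\dot y(s) \in -N(\tilde C(s); y(s)) + \tilde h(s), \qquad y(0) = x_0.$$
Now invoke the classical existence theorem for perturbed sweeping processes with Lipschitz-moving $\rho$-uniformly prox-regular sets and an integrable forcing (via the catching-up algorithm, or a Moreau--Yosida regularization) to obtain a unique absolutely continuous $y$, and set $x(t) := y(\varphi(t))$, which inherits absolute continuity and solves the original inclusion. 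Uniqueness can also be derived directly from the hypomonotonicity in Proposition \ref{prox-regularity}: once the velocity estimate below is in hand, two solutions $x_1,x_2$ satisfy $\tfrac{1}{2}\tfrac{d}{dt}\|x_1 - x_2\|^2 \leq \tfrac{2\|h(t)\| + |\dot v(t)|}{\rho}\|x_1 - x_2\|^2$, and Gronwall closes the argument.

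\textit{Velocity estimate, the main obstacle.} Write $w(t) := h(t) - \dot x(t) \in N(C(t); x(t))$. At a point $t$ of differentiability of $x$ and $v$, use $x(t+\tau) \in C(t+\tau)$ together with \ref{HC} to select, for small $\tau > 0$, a point $z_\tau \in C(t)$ with $\|z_\tau - x(t+\tau)\| \leq |v(t+\tau) - v(t)|$. The defining inequality of the proximal normal cone at $x(t)$ evaluated at $z_\tau$ reads
$$\langle w(t), z_\tau - x(t)\rangle \leq \tfrac{\|w(t)\|}{\rho}\|z_\tau - x(t)\|^2.$$
Splitting $z_\tau - x(t) = (z_\tau - x(t+\tau)) + (x(t+\tau) - x(t))$, dividing by $\tau$, and letting $\tau \downarrow 0$ (the right-hand side is of order $\tau$ since $\|z_\tau - x(t)\| = O(\tau)$) yield
$$\|w(t)\|^2 \leq \langle w(t), h(t)\rangle + \|w(t)\|\,|\dot v(t)|.$$
Cauchy--Schwarz and division by $\|w(t)\|$ (the bound is trivial when $w(t)=0$) then deliver the claimed inequality. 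The delicate point, and the main obstacle overall, is passing to the limit cleanly: one must discard the quadratic curvature term at order $\tau$ while extracting the linear normal contribution, which requires both the uniform prox-regularity of $C(t)$ and the absolutely continuous control afforded by $v$. The remaining ingredients are standard machinery for perturbed sweeping processes.
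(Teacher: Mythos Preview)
The paper does not prove this lemma at all; it simply records it as \cite[Proposition~1]{MR2179241}. Your sketch therefore goes beyond the paper, and the existence part (reparametrize by $\varphi$ to make the moving set $1$-Lipschitz, then invoke the classical perturbed sweeping process theory) is a standard and correct reduction.

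The velocity estimate, however, has a direction error. With $\tau>0$, the proximal inequality $\langle w(t),z_\tau-x(t)\rangle\le \tfrac{\|w(t)\|}{2\rho}\|z_\tau-x(t)\|^2$ yields, after splitting $z_\tau-x(t)=(z_\tau-x(t+\tau))+(x(t+\tau)-x(t))$, dividing by $\tau>0$, and letting $\tau\downarrow0$, only
\[
\langle w(t),\dot x(t)\rangle\le \|w(t)\|\,|\dot v(t)|.
\]
Since $\|w(t)\|^2=\langle w(t),h(t)\rangle-\langle w(t),\dot x(t)\rangle$, this produces a \emph{lower} bound $\|w(t)\|^2\ge \langle w(t),h(t)\rangle-\|w(t)\|\,|\dot v(t)|$, not the upper bound you claim. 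The remedy is to take $\tau<0$: hypothesis \ref{HC} is symmetric in $s,t$, so for $s=t+\tau<t$ one still finds $z_\tau\in C(t)$ with $\|z_\tau-x(t+\tau)\|\le|v(t+\tau)-v(t)|$; dividing the same inequality by the negative $\tau$ reverses it and gives $\langle w(t),\dot x(t)\rangle\ge -\|w(t)\|\,|\dot v(t)|$, hence
\[
\|w(t)\|^2\le\|w(t)\|\,\|h(t)\|+\|w(t)\|\,|\dot v(t)|,
\]
which is the desired estimate. With this correction your argument is complete and is in fact the argument underlying the cited reference.
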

Now, we present the main result of this section, that is, the well-posedness for the nonconvex integro-differential sweeping process \eqref{SP}.  It is worth emphasizing that the existence of solutions for \eqref{SP} was obtained in \cite{MR4492538,MR4421896} using an approximation scheme and different Gronwall inequalities. In this section, our contribution is to get the existence of solutions through a fixed point argument and enhanced versions of Gronwall's inequality obtained in Section \ref{sec1}.  We refer to \cite[Theorem~4.1]{MR4614261} for a related fixed point argument for ball compact moving sets.

\begin{Theorem}\label{Existence} Let $z$ be an absolutely continuous function and assume, in addition to \ref{HC}, that \ref{Hf} and \ref{Hg} hold.  Then, for any $x_0\in C(T_0)$, there exists a unique absolutely continuous solution $x(\cdot)$ of \eqref{SP}. \newline  Moreover, the following bounds hold
\begin{equation*}
\begin{aligned}
\Vert x(t)\Vert &\leq r(t):=\Vert x_0\Vert \exp\left( \int_{T_0}^t \gamma(s)ds\right)+\int_{T_0}^t (\gamma(s)+\vert \dot{v}(s)\vert+\Vert \dot{z}(t)\Vert)\exp\left(\int_s^t \gamma(\tau)d\tau \right)ds &\textrm{ for all } t\in I,\\
\Vert \dot{x}(t)\Vert & \leq \theta(t):=\gamma(t)(1+r(t))+\vert \dot{v}(t)\vert +\Vert \dot{z}(t)\Vert &\textrm{ for a.e. } t\in I,
\end{aligned}
\end{equation*}
where $\gamma(t):=2\beta(t)+2\int_{T_0}^t \sigma(t,s)ds$.
\end{Theorem}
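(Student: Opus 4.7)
My plan is to apply the history-dependent fixed-point theorem (Proposition \ref{History}) to an operator $\mathcal{F}$ built from the classical sweeping process via Lemma \ref{Lemma-1}. First I would absorb $z$ into the moving set: the shifted map $\tilde C(t):=C(t)+z(t)$ is $\rho$-uniformly prox-regular (translations preserve this) and satisfies \ref{HC} with absolutely continuous variation $\tilde v(t):=v(t)+\int_{T_0}^t \Vert \dot z(s)\Vert ds$, so that $\vert \dot{\tilde v}(t)\vert \leq \vert \dot v(t)\vert+\Vert \dot z(t)\Vert$ a.e.

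For the a priori bounds, if $x$ is any solution and $h(t):=f(t,x(t))+\int_{T_0}^t g(t,s,x(s))ds$, then Lemma \ref{Lemma-1} applied to $\tilde C$ yields $\Vert \dot x(t)\Vert \leq 2\Vert h(t)\Vert +\vert \dot v(t)\vert+\Vert \dot z(t)\Vert$. Using the linear growth in \ref{Hf} and \ref{Hg}, the function $\rho(t):=\Vert x(t)\Vert$ satisfies an inequality of the form
$$
\dot \rho(t)\leq \gamma(t)+\vert \dot v(t)\vert+\Vert \dot z(t)\Vert +2\beta(t)\rho(t)+2\int_{T_0}^t \sigma(t,s)\rho(s)ds,
$$
with $\gamma(t)=2\beta(t)+2\int_{T_0}^t\sigma(t,s)ds$. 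This matches the hypothesis of Theorem \ref{TeoI} (Enhanced Gronwall I) verbatim and delivers $\Vert x(t)\Vert \leq r(t)$; substituting back into the inequality for $\Vert \dot x(t)\Vert$ gives $\Vert \dot x(t)\Vert \leq \theta(t)$.

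Knowing that any solution is confined to $R\mathbb{B}$ with $R:=\max_{t\in I} r(t)$, I truncate $f,g$ outside $R\mathbb{B}$ (by precomposition with the projection onto $R\mathbb{B}$), obtaining $f_R,g_R$ which are globally Lipschitz in $x$ with integrable constants $\kappa_R,\mu_R$ and the same linear growth. This does not alter solutions. I then define $\mathcal{F}\colon C(I;\H)\to C(I;\H)$ by letting $\mathcal{F}(y)$ be the unique absolutely continuous solution, provided by Lemma \ref{Lemma-1}, of
$$
\dot x(t)\in -N(\tilde C(t);x(t))+f_R(t,y(t))+\int_{T_0}^t g_R(t,s,y(s))ds,\quad x(T_0)=x_0.
$$
For $y_1,y_2\in C(I;\H)$ and $x_i:=\mathcal{F}(y_i)$, the hypomonotonicity characterization of $\rho$-uniform prox-regularity in Proposition \ref{prox-regularity}, combined with the $\zeta$-bound of Lemma \ref{Lemma-1} and Lemma \ref{lemma-derivative}, produces a linear differential inequality for $\Vert x_1(t)-x_2(t)\Vert$ whose forcing term is controlled by $\kappa_R(t)\Vert y_1(t)-y_2(t)\Vert+\int_{T_0}^t\mu_R(t)\Vert y_1(s)-y_2(s)\Vert ds$. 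The classical Gronwall inequality then yields $\Vert \mathcal{F}(y_1)(t)-\mathcal{F}(y_2)(t)\Vert \leq \int_{T_0}^t \eta(s)\Vert y_1(s)-y_2(s)\Vert ds$ for some integrable $\eta$.

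The main obstacle is that Proposition \ref{History} requires a \emph{constant} Lipschitz kernel, whereas my $\eta$ is only integrable. To bridge this gap I would reparametrize time via the absolutely continuous homeomorphism $t\mapsto \int_{T_0}^t(1+\eta(s))ds$, under which the kernel becomes bounded by $1$ and Proposition \ref{History} applies to produce a unique fixed point $x^*$ of $\mathcal{F}$. Pulling back yields the unique absolutely continuous solution of $(\mathcal{SP})$, which by construction inherits the bounds $\Vert x^*(t)\Vert\leq r(t)$ and $\Vert \dot x^*(t)\Vert\leq \theta(t)$ from the first step.
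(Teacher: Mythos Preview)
Your proposal is correct and follows essentially the same strategy as the paper's proof: truncation by composing with the projection onto $r(\cdot)\mathbb{B}$, a history-dependent fixed-point argument built on Lemma~\ref{Lemma-1} and the hypomonotonicity estimate of Proposition~\ref{prox-regularity}, a time reparametrization to convert the integrable kernel into a constant one so that Proposition~\ref{History} applies, and the a~priori bound via Theorem~\ref{TeoI}. The only cosmetic difference is the order of operations---the paper reparametrizes the problem \emph{before} setting up the fixed-point operator (defining $\tilde f,\tilde g$ on the new time scale $J$), whereas you set up $\mathcal{F}$ on the original interval and reparametrize afterwards---but the two routes are equivalent.
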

\begin{proof}
Let us consider the integrable function 
\begin{equation}\label{def-phi}
\varphi(t):=\max\{1,\kappa_{r(T)}(t),\mu_{r(T)}(t),\beta(t),\int_{T_0}^t \sigma(t,s)ds\}.
\end{equation}
 The map $t\mapsto \int_{T_0}^t \varphi(s)ds$ from $I$ to $J:=[0,\int_{T_0}^T \varphi(s)ds]$ is continuous and strictly increasing. Let $\phi$ be its inverse and define
$$
\tilde{f}(t,x):=\frac{1}{\varphi(\phi(t))}f(\phi(t),x) \textrm{ and } \tilde{g}(t,s,x)=\frac{1}{\varphi(\phi(t))}\frac{1}{\varphi(\phi(s))}g(\phi(t),\phi(s),x).
$$ 
It is clear that $\tilde{f}$ and $\tilde{g}$ satisfy \ref{Hf} and \ref{Hg}, respectively. Moreover, for all $x,y\in r(T)\mathbb{B}$, one has
\begin{equation}\label{cotas-fg}
\begin{aligned}
\Vert \tilde{f}(t,x)-\tilde{f}(t,y)\Vert \leq \Vert x-y\Vert \textrm{ and }  \Vert \tilde{g}(t,s,x)-\tilde{g}(t,s,y)\Vert \leq \Vert x-y\Vert. 
\end{aligned}
\end{equation}
\noindent  Given $y\in C(J;\H)$, let us consider  $w(\cdot)$ be the unique solution (see Lemma \ref{Lemma-1}) to the problem:
\begin{equation}\label{Sweeping-bi}
\left\{
\begin{aligned}
\dot{w}(t)&\in -N_{C(\phi(t))+z(\phi(t))}(w(t))+h(y)(t) \quad \textrm{ a.e. } t\in J,\\
 w(0)&=x_0,
\end{aligned} 
\right.
\end{equation}
where $h(y)(t):=\tilde{f}(t,\operatorname{proj}_{r(\phi(t))\mathbb{B}}(y(t)))+\int_{0}^t \tilde{g}(t,s,\operatorname{proj}_{r(\phi(s))\mathbb{B}}(y(s)))ds$. According to Lemma \ref{Lemma-1},  \ref{Hf}, and \ref{Hg}, for a.e. $t\in J$ one has
\begin{equation*}
\begin{aligned}
\Vert \dot{w}(t)-h(y)(t)\Vert &\leq \Vert h(y)(t)\Vert +\vert \dot{v}(\phi(t))\vert +\Vert \dot{z}(\phi(t))\Vert\\
&\leq \frac{\beta(\phi(t))}{\varphi(\phi(t))}(1+r(\phi(t)))+\int_{0}^{t} \frac{\sigma(\phi(t),\phi(s))}{\varphi(\phi(t))\varphi(\phi(s))}(1+r(\phi(s)))ds+\vert \dot{v}(\phi(t))\vert+\Vert \dot{z}(\phi(t))\Vert\\
&\leq \left[\frac{\beta(\phi(t))}{\varphi(\phi(t))}+\int_{0}^{t} \frac{\sigma(\phi(t),\phi(s))}{\varphi(\phi(t))\varphi(\phi(s))}ds\right] (1+r(\phi(t)))+\vert \dot{v}(\phi(t))\vert+\Vert \dot{z}(\phi(t))\Vert\\
&=\left[\frac{\beta(\phi(t))}{\varphi(\phi(t))}+\frac{\int_{T_0}^{\phi(t)} \sigma(\phi(t),s)ds}{{\varphi(\phi(t))}}\right] (1+r(\phi(t)))+\vert \dot{v}(\phi(t))\vert+\Vert \dot{z}(\phi(t))\Vert\\
&\leq \eta(t):=2(1+r(\phi(t)))+\vert \dot{v}(\phi(t))\vert+\Vert \dot{z}(\phi(t))\Vert,
\end{aligned}
\end{equation*}
where we have used that $t\mapsto r(\phi(t))$ is nondecreasing and definition \eqref{def-phi}. \newline
Next, we consider the operator $\mathcal{F}\colon C(J;\H)\to C(J;\H)$ defined as $\mathcal{F}(y):=w$, where $w(\cdot)$ is the unique solution of  the problem \eqref{Sweeping-bi}.\newline
\textbf{Claim 1:} The operator $\mathcal{F}$ has a unique fixed point.\newline 
\emph{Proof of Claim 1}: 
Let us consider $V(t):=\frac{1}{2}\Vert w_1(t)-w_2(t)\Vert^2$, where $w_i:=\mathcal{F}(y_i)$, for $i=1,2$. Then, for a.e. $t\in J$, we obtain that
\begin{equation*}
\begin{aligned}
\dot{V}(t)&=\langle w_1(t)-w_2(t),\dot{w}_1(t)-\dot{w}_2(t)\rangle \\
&=\langle (w_1(t)-z(\phi(t)))-(w_2(t)-z(\phi(t))),(\dot{w}_1(t)-h(y_1)(t))-(\dot{w}_2(t)-h(y_2)(t))\rangle\\
&+\langle w_1(t)-w_2(t),h(y_1)(t)-h(y_2)(t)\rangle\\
&\leq \frac{\eta(t)}{\rho}\Vert w_1(t)-w_2(t)\Vert^2+\Vert w_1(t)-w_2(t)\Vert\cdot\Vert h(y_1)(t)-h(y_2)(t)\Vert,
\end{aligned}
\end{equation*}
where we have used Proposition \ref{prox-regularity} and that for $i=1,2,$ one has
$$
\frac{w_i(t)-h(y_i)(t)}{\eta(t)}\in -N_{C(\phi(t))}(w_i(t)-z(\phi(t)))\cap \mathbb{B} \textrm{ for a.e. } t\in J.
$$
Next, if we consider $W(t):=\Vert w_1(t)-w_2(t)\Vert$, according to Lemma \ref{lemma-derivative}, we obtain that
\begin{equation*}
\dot{W}(t)\leq  \frac{\eta(t)}{\rho}W(t)+\Vert h(y_1)(t)-h(y_2)(t)\Vert  \textrm{ for a.e. } t\in J.
\end{equation*}
On the other hand, by virtue of \eqref{cotas-fg} and the Lipschitzianity of constant 1 of the map $x\mapsto \operatorname{proj}_{r(t)}(x)$, we obtain that for a.e. $t\in J$
\begin{equation*}
\begin{aligned}
\Vert h(y_1)(t)-h(y_2)(t)\Vert &\leq \Vert \tilde{f}(t,\operatorname{proj}_{r(t)\mathbb{B}}(y_1(t)))-\tilde{f}(t,\operatorname{proj}_{r(t)\mathbb{B}}(y_2(t)))\Vert \\
&+\int_{0}^t \Vert \tilde{g}(t,s,\operatorname{proj}_{r(s)\mathbb{B}}(y_1(s)))-\tilde{g}(t,s,\operatorname{proj}_{r(s)\mathbb{B}}(y_2(s)))\Vert ds\\
&\leq \Vert y_1(t)-y_2(t)\Vert +\int_{0}^t \Vert y_1(s)-y_2(s)\Vert  ds.
\end{aligned}
\end{equation*}
Therefore, for a.e.  $t\in J$, 
\begin{equation*}
\begin{aligned}
\frac{d}{dt}\left( W(t)\exp\left(-\int_0^t \frac{\eta(s)}{\rho}ds\right)\right)&\leq  \exp\left(-\int_0^t \frac{\eta(s)}{\rho}ds\right) \left( \Vert y_1(t)-y_2(t)\Vert +\int_{0}^t \Vert y_1(s)-y_2(s)\Vert  ds\right)\\
&\leq e^t \left( \Vert y_1(t)-y_2(t)\Vert +\int_{0}^t \Vert y_1(s)-y_2(s)\Vert  ds\right)\\
&=\frac{d}{dt} \left( e^t \int_{0}^t \Vert y_1(s)-y_2(s)\Vert  ds\right),
\end{aligned}
\end{equation*}
which implies that
$$
W(t)\leq \exp\left(\int_0^t \frac{\eta(s)}{\rho}ds+1\right)\int_{0}^t \Vert y_1(s)-y_2(s)\Vert  ds \quad  \textrm{ for all } t\in J.
$$
Hence, the operator $\mathcal{F}$ is history-dependent. Therefore, by virtue of Proposition \ref{History}, the operator $\mathcal{F}$ has a unique fixed point $\bar{w}\in C(J;\H)$, which proves Claim 1. \newline
On the other hand, it is routine to prove that $x(t):=\bar{w}\left(\int_{T_0}^t \varphi(s)ds\right)$ solves the problem: 
\begin{equation}\label{Sweeping-bi2}
\left\{
\begin{aligned}
\dot{x}(t)&\in -N_{C(t)+z(t)}(x(t))+f(t,\operatorname{proj}_{r(t)\mathbb{B}}(x(t)))+\int_{T_0}^t {g}(t,s,\operatorname{proj}_{r(s)\mathbb{B}}(x(s)))ds \quad \textrm{ a.e. } t\in I,\\
 x(T_0)&=x_0.
\end{aligned} 
\right.
\end{equation}
\textbf{Claim 2}: For all $t\in I$, $\Vert x(t)\Vert \leq r(t)$. \newline
\noindent \emph{Proof of Claim 2:} According to Lemma \ref{Lemma-1}, for a.e. $t\in I$, one has
\begin{equation*}
\begin{aligned}
\Vert \dot{x}(t)\Vert & \leq 2\Vert f(t,\operatorname{proj}_{r(t)\mathbb{B}}(x(t)))\Vert +2\int_{T_0}^t \Vert g(t,s,\operatorname{proj}_{r(s)\mathbb{B}}(x(s)))\Vert ds+\vert \dot{v}(t)\vert +\Vert \dot{z}(t)\Vert\\
&\leq 2\beta(t)(1+\Vert \operatorname{proj}_{r(t)\mathbb{B}}(x(t))\Vert)+2\int_{T_0}^t \sigma(t,s)\Vert \operatorname{proj}_{r(s)\mathbb{B}}(x(s))\Vert ds+ \vert \dot{v}(t)\vert +\Vert \dot{z}(t)\Vert\\
&\leq 2\beta(t)(1+\Vert x(t)\Vert)+2\int_{T_0}^t \sigma(t,s)(1+\Vert x(s)\Vert) ds+ \vert \dot{v}(t)\vert +\Vert \dot{z}(t)\Vert,
\end{aligned}
\end{equation*}
where we have used that $\Vert \operatorname{proj}_{r(t)\mathbb{B}}(x(t))\Vert \leq  \Vert x(t)\Vert$. Therefore, according to Theorem \ref{TeoI}, we obtain that $\Vert x(t)\Vert \leq r(t)$ for all $t\in I$, which proves Claim 2. \newline 
\textbf{Claim 3}:  The trajectory $x$ solves \eqref{SP} and for a.e. $t\in I$, $\Vert \dot{x}(t)\Vert \leq \theta(t)$.\\
\noindent \emph{Proof of Claim 3:} Since $\Vert x(t)\Vert \leq r(t)$ for all $t\in I$, it follows from \eqref{Sweeping-bi2} that $x$ solves \eqref{SP}. Moreover, from Lemma \ref{Lemma-1}, it follows that 
\begin{equation*}
\begin{aligned}
\Vert \dot{x}(t)\Vert & \leq 2\Vert f(t,x(t))\Vert +2\int_{T_0}^t \Vert g(t,s,x(s))\Vert ds+\vert \dot{v}(t)\vert +\Vert \dot{z}(t)\Vert\\
&\leq 2\beta(t)(1+ r(t))+2\int_{T_0}^t \sigma(t,s)(1+ r(s)) ds+ \vert \dot{v}(t)\vert +\Vert \dot{z}(t)\Vert\\
&\leq\gamma(t)(1+r(t))+ \vert \dot{v}(t)\vert +\Vert \dot{z}(t)\Vert,
\end{aligned}
\end{equation*}
which proves Claim 3.\newline 
\textbf{Claim 4}:  The trajectory $x$ is the unique solution of \eqref{SP}.\newline 
\noindent \emph{Proof of Claim 4:} Let $x_1$ and $x_2$ be two solutions of \eqref{SP}. Consider  
$V(t):=\frac{1}{2}\Vert w_1(t)-w_2(t)\Vert^2$, where $w_i(t):=x_i(\phi(t))$, for $i=1,2$. Similarly to the proof of Claim 1, we can prove the existence of a constant $\mathcal{C}\geq 0$ such that 
$$
\Vert w_1(t)-w_2(t)\Vert \leq \mathcal{C}\int_{0}^t \Vert w_1(s)-w_2(s)\Vert ds \textrm{ for all } t\in J.
$$
Then, by using that $w_1(0)=w_2(0)$ and the above inequality, we obtain that $w_1\equiv w_2$. Therefore, $x_1\equiv x_2$, which proves Claim 4.  
\end{proof}

\subsection{Integro-differential sweeping processes driven by a fixed convex set}

In this section, we consider the particular case of  $C(t)\equiv C$, where $C$ is a nonempty, closed and convex set of a Hilbert space $\H$. Hence, we are interested in the problem
\begin{equation}\label{SP-M}
\left\{
\begin{aligned}
\dot{x}(t)& \in -N_{C}(x(t))+f(t,x(t))+\int_{T_0}^t g(t,s,x(s))ds &  \textrm{ a.e. } t\in I,\\
x(T_0)&=x_0\in C.
\end{aligned}
\right.
\end{equation}
In the present case, the operator $x\mapsto N_C(x)$ is maximal monotone, making possible to obtain more information about the behavior of the solutions.  We recall the concept of slow solution introduced in \cite{MR717499} and widely used in differential inclusions governed by maximal monotone operators (see, e.g., \cite{MR348562}).
\begin{definition}[Slow solution]
We say that $x(\cdot)$ is a \emph{slow} solution of \eqref{SP-M} if  $x(T_0)=x_0$ and 
$$
\dot{x}(t)=-\bar{\nu}(t)+f(t,x(t))+\int_{T_0}^t g(t,s,x(s))ds \quad \textrm{ a.e. } t\in I,
$$
where
$$
\bar{\nu}(t):=\operatorname{argmin}_{\nu \in N_C(x(t))}\Vert f(t,x(t))+\int_{T_0}^t g(t,s,x(s))ds-\nu\Vert.
$$
\end{definition}
The next result asserts that the unique solution of \eqref{SP-M} is slow.
\begin{Theorem}\label{Existence-2} Assume, in addition to \ref{HC}, that \ref{Hf} and \ref{Hg} hold.  Then, for any $x_0\in C$, there exists a unique absolutely continuous solution $x(\cdot)$ of \eqref{SP-M}. Moreover, $x(\cdot)$ is a slow solution of \eqref{SP-M} and the following bounds hold
\begin{equation*}
\begin{aligned}
\Vert x(t)\Vert &\leq r(t):=\Vert x_0\Vert \exp\left( \int_{T_0}^t \gamma(s)ds\right)+\int_{T_0}^t \gamma(s)\exp\left(\int_s^t \gamma(\tau)d\tau \right)ds &\textrm{ for all } t\in I,\\
\Vert \dot{x}(t)\Vert & \leq \theta(t):=\gamma(t)(1+r(t)) &\textrm{ for a.e. } t\in I,
\end{aligned}
\end{equation*}
where $\gamma(t):=2\beta(t)+2\int_{T_0}^t \sigma(t,s)ds$.
\end{Theorem}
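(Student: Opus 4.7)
The plan is to obtain existence, uniqueness, and the two bounds as an immediate corollary of Theorem~\ref{Existence}, and then to deduce the slow-solution property via a tangent/normal cone orthogonality argument that is special to the convex case.

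First, I would observe that when $C(t)\equiv C$ is constant, assumption \ref{HC} is satisfied with the trivial choice $v\equiv 0$, and since $C$ is convex it is $\rho$-uniformly prox-regular for every $\rho\in ]0,+\infty]$. Moreover, taking $z\equiv 0$ in \eqref{SP} reduces that problem to \eqref{SP-M}. Consequently Theorem~\ref{Existence} applies verbatim and yields both the unique absolutely continuous solution $x(\cdot)$ and the two pointwise bounds stated in Theorem~\ref{Existence-2}, because the terms $|\dot v|$ and $\|\dot z\|$ appearing in Theorem~\ref{Existence} vanish identically. So the only genuinely new content is the slow-solution identification.

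For the slow-solution part, set $h(t):=f(t,x(t))+\int_{T_0}^{t}g(t,s,x(s))ds$, so that from \eqref{SP-M} we have $h(t)-\dot{x}(t)\in N_{C}(x(t))$ for a.e.\ $t\in I$. Since $x(t)\in C$ for all $t\in I$ and $C$ is convex, the definition of the normal cone gives
$$
\langle h(t)-\dot{x}(t), x(s)-x(t)\rangle \leq 0 \quad \text{for all } s\in I.
$$
At any point $t$ in the full-measure set where $\dot x(t)$ exists, dividing by $s-t$ and letting $s\downarrow t$ (respectively $s\uparrow t$) yields $\langle h(t)-\dot x(t),\dot x(t)\rangle\le 0$ and $\langle h(t)-\dot x(t),\dot x(t)\rangle\ge 0$, hence
$$
\langle h(t)-\dot x(t),\dot x(t)\rangle=0 \quad \text{a.e. } t\in I.
$$
The same argument shows $\dot x(t)\in T_C(x(t))$ (the Bouligand/contingent tangent cone, which equals the polar of $N_C(x(t))$ by convexity). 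Therefore $\dot x(t)$ and $h(t)-\dot x(t)$ are mutually polar: one belongs to $T_C(x(t))$, the other to $N_C(x(t))=T_C(x(t))^\circ$, and they are orthogonal. By Moreau's decomposition of $h(t)$ along the mutually polar cones $T_C(x(t))$ and $N_C(x(t))$, the element $h(t)-\dot x(t)$ is precisely the projection of $h(t)$ onto $N_C(x(t))$, i.e. the unique minimizer in $\operatorname{argmin}_{\nu\in N_C(x(t))}\|h(t)-\nu\|$. Calling this minimizer $\bar\nu(t)$, we get $\dot x(t)=h(t)-\bar\nu(t)$, which is the slow-solution identity.

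The only technical point requiring care is the legitimacy of the one-sided limit arguments: one needs $\dot x(t)$ to exist (granted a.e.\ by absolute continuity) and the membership $x(s)\in C$ for all $s$ (which is built into $\eqref{SP-M}$ since $x(s)\in C(s)+z(s)=C$). Everything else is a direct combination of Theorem~\ref{Existence} with Moreau's polar-cone decomposition, so I expect no real obstacle beyond making these elementary observations precise.
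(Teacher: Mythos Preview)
Your proposal is correct and follows essentially the same approach as the paper: existence, uniqueness, and the two bounds are obtained by specializing Theorem~\ref{Existence} with $v\equiv 0$ and $z\equiv 0$, exactly as the paper does. For the slow-solution property the paper simply cites \cite[Proposition~3.4]{MR348562} (Br\'ezis), whose content is precisely the orthogonality/Moreau-decomposition argument you have written out in detail; so your argument is not a different route but an explicit unpacking of that citation, and it is valid as stated.
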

\begin{proof}
It follows from \cite[Proposition~3.4]{MR348562} and  Theorem \ref{Existence}.
\end{proof}

\section{Continuous dependence of solutions for the integro-differential sweeping process}\label{sec3}
In this section, we prove the continuous dependence of solutions for the integro-differential sweeping process of Volterra type. Our approach is based on enhanced Gronwall inequalities established in Section \ref{sec1}. \newline 
\noindent Given a nonnegative integrable function $R$, we consider the set
$$
\mathcal{B}_R:=\{z\in AC(I;\H)\colon \Vert \dot{z}(t)\Vert \leq R(t) \textrm{ a.e. } t\in I\}.
$$
In what follows, given $r>0$, we denote
\begin{equation*}
d_r(f_1,f_2)(t):=\sup_{x\in r\mathbb{B}}\Vert f_1(t,x)-f_2(t,x)\Vert \textrm{ and } D_r(g_1,g_2)(t,s):=\sup_{x\in r\mathbb{B}}\Vert g_1(t,s,x)-g_2(t,s,x)\Vert.
\end{equation*}

\begin{Theorem}\label{Well-posedness}
 Assume that \ref{HC} holds.  Let $x_0^1, x_0^2\in C(T_0)$,  $z_1, z_2\in \mathcal{B}_R$, and assume that  $f_1$, $f_2$, and $g_1$, $g_2$ satisfy \ref{Hf} and \ref{Hg}, respectively. Let $x_i$ be the unique solution of $\mathcal{SP}(x_0^i,z_i,f_i,g_i)$ for $i=1,2$. Then, for all $t\in I$, one has
\begin{equation*}
\begin{aligned}
\Vert x_1(t)-x_2(t)\Vert^2 \leq \Vert x_0^1-x_0^2\Vert^2 \exp\left(\int_{T_0}^t (\delta(s)+\Delta(s))ds\right)+2\int_{T_0}^t (\varepsilon(s)+\Delta(s))\exp\left(\int_s^t (\delta(\tau)+\Delta(\tau))d\tau\right)ds,
\end{aligned}
\end{equation*}
where 
\begin{equation*}
\begin{aligned}
\Delta(t)&:=\sqrt{2} d_{r(T)}(f_1,f_2)(t)+\sqrt{2}\int_{T_0}^tD_{r(T)}(g_1,g_2)(t,s)ds\\
\delta(t)&:=2\frac{\nu(t)}{\rho}+2\kappa_{r(T)}+2\int_{T_0}^t \mu_{r(T)}(s)ds\\
\varepsilon(t)&:=2\nu(t)\Vert z_1(t)-z_2(t)\Vert \textrm{ and } \nu(t):=\frac{\gamma(t)}{2}(1+r(t))+\vert \dot{v}(t)\vert +R(t).
\end{aligned}
\end{equation*}
\end{Theorem}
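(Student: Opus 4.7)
The plan is to differentiate $V(t):=\tfrac{1}{2}\|x_1(t)-x_2(t)\|^2$, use the hypomonotonicity of the proximal normal cone from Proposition \ref{prox-regularity}(b), bound the drift difference via the Lipschitz hypotheses \ref{Hf} and \ref{Hg} together with the semi-distances $d_{r(T)}$ and $D_{r(T)}$, and finish with Theorem \ref{TeoII}(a). The key preliminary is a translation: setting $y_i(t):=x_i(t)-z_i(t)\in C(t)$ and $h_i(t):=f_i(t,x_i(t))+\int_{T_0}^t g_i(t,s,x_i(s))\,ds$, the identity $N_{C(t)+z_i(t)}(x_i(t))=N_{C(t)}(y_i(t))$ gives $\dot x_i(t)-h_i(t)\in -N_{C(t)}(y_i(t))$. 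Using the a priori bound $\|x_i(t)\|\le r(T)$ from Theorem \ref{Existence} and Lemma \ref{Lemma-1}, one has $\|\dot x_i(t)-h_i(t)\|\le\nu(t)$ a.e.\ on $I$, so $-(\dot x_i-h_i)/\nu(t)\in N_{C(t)}(y_i)\cap\mathbb{B}$.

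Differentiating $V$ and writing $x_1-x_2=(y_1-y_2)+(z_1-z_2)$, I would decompose
\[
\dot V(t)=\langle y_1-y_2,(\dot x_1-h_1)-(\dot x_2-h_2)\rangle+\langle z_1-z_2,(\dot x_1-h_1)-(\dot x_2-h_2)\rangle+\langle x_1-x_2,h_1-h_2\rangle.
\]
Proposition \ref{prox-regularity}(b) applied to the $y$-part bounds it by $\tfrac{\nu(t)}{\rho}\|y_1-y_2\|^2$; Cauchy--Schwarz with $\|\dot x_i-h_i\|\le\nu(t)$ controls the $z$-part by $2\nu(t)\|z_1-z_2\|=\varepsilon(t)$. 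For the drift part, insert and subtract $f_1(t,x_2(t))$ and $g_1(t,s,x_2(s))$, invoke \ref{Hf}(b) and \ref{Hg}(b), and use the definitions of $d_{r(T)}(f_1,f_2)$ and $D_{r(T)}(g_1,g_2)$ to obtain
\[
\|h_1(t)-h_2(t)\|\le\kappa_{r(T)}(t)\|x_1-x_2\|+\int_{T_0}^t\!\mu_{r(T)}(s)\|x_1(s)-x_2(s)\|\,ds+d_{r(T)}(f_1,f_2)(t)+\int_{T_0}^t\! D_{r(T)}(g_1,g_2)(t,s)\,ds.
\]
Writing $\|x_1-x_2\|=\sqrt{2V}$ then converts the drift contribution into exactly $\Delta(t)\sqrt{V(t)}$, $2\kappa_{r(T)}(t)V(t)$, and $2\sqrt{V(t)}\int_{T_0}^t\mu_{r(T)}(s)\sqrt{V(s)}\,ds$ (the $\sqrt{2}$ factors in $\Delta$ arising from $\sqrt{2V}$).

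Assembling the pieces—and reducing $\tfrac{\nu(t)}{\rho}\|y_1-y_2\|^2$ using the expansion $\|y_1-y_2\|^2=\|x_1-x_2\|^2-2\langle x_1-x_2,z_1-z_2\rangle+\|z_1-z_2\|^2$—yields a differential inequality of the form
\[
\dot V(t)\le \varepsilon(t)+\Delta(t)\sqrt{V(t)}+\left(\tfrac{2\nu(t)}{\rho}+2\kappa_{r(T)}(t)\right)V(t)+2\sqrt{V(t)}\int_{T_0}^t\mu_{r(T)}(s)\sqrt{V(s)}\,ds,
\]
which matches the hypothesis of Theorem \ref{TeoII}(a) with $K_1=\Delta$, $K_2=\tfrac{2\nu}{\rho}+2\kappa_{r(T)}$, $K_3=2$, $K_4=\mu_{r(T)}$ and therefore $\gamma+K_1=\delta+\Delta$. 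Applying that theorem and then multiplying by $2$ (since $V=\tfrac{1}{2}\|x_1-x_2\|^2$, while the conclusion involves $\|x_1-x_2\|^2$) delivers the announced estimate.

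The main obstacle is the bookkeeping inside $\tfrac{\nu(t)}{\rho}\|y_1-y_2\|^2$: the unwanted cross term $-2\tfrac{\nu(t)}{\rho}\langle x_1-x_2,z_1-z_2\rangle$ and the quadratic $\tfrac{\nu(t)}{\rho}\|z_1-z_2\|^2$ must be redistributed so that the coefficient of $V(t)$ lands exactly on $\tfrac{2\nu(t)}{\rho}$, the coefficient of $\sqrt{V(t)}$ remains $\Delta(t)$, and every remaining piece is controlled by $2\nu(t)\|z_1-z_2\|$. Here the membership $z_1,z_2\in\mathcal{B}_R$ is essential: it bounds $\|z_1-z_2\|$ on $I$ so that quadratic pieces $\|z_1-z_2\|^2$ can be absorbed into the linear source $\varepsilon$. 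The remaining verifications (integrability of the $K_i$ and, if needed, passing between $V$ and $\sqrt V$ via Lemma \ref{lemma-derivative}) are routine.
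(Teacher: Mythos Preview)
Your strategy coincides with the paper's: define $h_i$, use Theorem~\ref{Existence} and Lemma~\ref{Lemma-1} to get $\|\dot x_i-h_i\|\le\nu$, split $\dot V$ into the same three pieces, apply hypomonotonicity to the $y$-part, Cauchy--Schwarz to the $z$-part, estimate $\|h_1-h_2\|$ via \ref{Hf}(b), \ref{Hg}(b) and the semi-distances, and close with Theorem~\ref{TeoII}(a). The architecture is exactly right, and the identification $K_1=\Delta$, $K_2=\tfrac{2\nu}{\rho}+2\kappa_{r(T)}$, $K_3=2$, $K_4=\mu_{r(T)}$ matches the paper's inequality~\eqref{inequality-V}.

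The one place where you are being \emph{more} careful than the paper is precisely the point you flag as the ``main obstacle.'' After invoking Proposition~\ref{prox-regularity}, the paper does not expand $\|y_1-y_2\|^2$ at all: it writes directly $\tfrac{\nu(t)}{\rho}\|x_1(t)-x_2(t)\|^2$ and moves on. No redistribution of cross terms or quadratic $\|z_1-z_2\|^2$ pieces is performed. Your observation that hypomonotonicity actually delivers $\tfrac{\nu}{\rho}\|(x_1-x_2)-(z_1-z_2)\|^2$ is correct, and the redistribution you sketch would \emph{not} land on the exact constants $\varepsilon,\delta,\Delta$ in the statement: the cross term $-\tfrac{2\nu}{\rho}\langle x_1-x_2,z_1-z_2\rangle$ produces an extra $\sqrt V$-term with a $\|z_1-z_2\|$-dependent coefficient (not part of $\Delta$), and $\tfrac{\nu}{\rho}\|z_1-z_2\|^2$ is not of the form $2\nu\|z_1-z_2\|$. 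So the paper obtains the announced bound by the bare replacement $\|y_1-y_2\|^2\mapsto\|x_1-x_2\|^2$, while your route yields a structurally identical estimate with slightly inflated $\varepsilon$ (or an augmented $\Delta$). This is a discrepancy in the constants, not in the method; apart from it, your proposal reproduces the paper's proof line by line.
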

\begin{proof} For $i=1,2$, let us consider 
$$
h_i(t):=f_i(t,x_i(t))+\int_{T_0}^t g_i(t,s,x_i(s))ds.
$$
Then, according to Theorem \ref{Existence}, for $i=1,2$, one has
\begin{equation}\label{proximal3}
\Vert \dot{x}_i(t)-h_i(t)\Vert \leq \nu(t):=\frac{\gamma(t)}{2}(1+r(t))+\vert \dot{v}(t)\vert +R(t),
\end{equation}
where $$r(t):=\Vert x_0\Vert \exp\left( \int_{T_0}^t \gamma(s)ds\right)+\int_{T_0}^t (\gamma(s)+\vert \dot{v}(s)\vert+R(t))\exp\left(\int_s^t \gamma(\tau)d\tau \right)ds.$$
Hence, for $i=1,2$, one has
$$
\frac{\dot{x}_i(t)-h_i(t)}{\nu(t)}\in -N_{C(t)}(x_i(t)-z_i(t))\cap \mathbb{B} \textrm{ for a.e. }t\in I.
$$
Next, let us consider $V(t):=\frac{1}{2}\Vert x_1(t)-x_2(t)\Vert^2$. Then, for a.e. $t\in I$, we obtain that
\begin{equation*}
\begin{aligned}
\dot{V}(t)&=\langle x_1(t)-x_2(t),\dot{x}_1(t)-\dot{x}_2(t)\rangle \\
&=\langle (x_1(t)-z_1(t))-(x_2(t)-z_2(t)),(\dot{x}_1(t)-h_1(t))-(\dot{x}_2(t)-h_2(t))\rangle\\
&+\langle z_1(t)-z_2(t),(\dot{x}_1(t)-h_1(t))-(\dot{x}_2(t)-h_2(t))\rangle\\
&+\langle x_1(t)-x_2(t),h_1(t)-h_2(t)\rangle\\
&\leq \frac{\nu(t)}{\rho}\Vert x_1(t)-x_2(t)\Vert^2+2\nu(t)\Vert z_1(t)-z_2(t)\Vert+\Vert x_1(t)-x_2(t)\Vert \cdot \Vert h_1(t)-h_2(t)\Vert\\
&=\frac{2\nu(t)}{\rho}V(t)+2\nu(t)\Vert z_1(t)-z_2(t)\Vert+\sqrt{2V(t)} \cdot \Vert h_1(t)-h_2(t)\Vert,
\end{aligned}
\end{equation*}
where we have used Proposition \ref{prox-regularity} and inequality \eqref{proximal3}. On the other hand, for all $t\in I$, one has
\begin{equation*}
\begin{aligned}
\Vert h_1(t)-h_2(t)\Vert &\leq \Vert f_1(t,x_1(t))-f_2(t,x_2(t))\Vert +\int_{T_0}^t \Vert g_1(t,s,x_1(s))-g_2(t,s,x_2(s))\Vert ds\\
&\leq d_{r(T)}(f_1,f_2)(t)+\kappa_{r(T)}(t)\Vert x_1(t)-x_2(t)\Vert \\
&+\int_{T_0}^tD_{r(T)}(g_1,g_2)(t,s)ds+\int_{T_0}^t \mu_{r(T)}(s)\Vert x_1(s)-x_2(s)\Vert ds\\
&= d_{r(T)}(f_1,f_2)(t)+\kappa_{r(T)}(t)\sqrt{2V(t)} +\int_{T_0}^tD_{r(T)}(g_1,g_2)(t,s)ds+\int_{T_0}^t \mu_{r(T)}(s)\sqrt{2V(s)} ds\end{aligned}
\end{equation*}
Hence, by using the above inequalities, we obtain that for a.e. $t\in I$, one has
\begin{equation}\label{inequality-V}
\begin{aligned}
\dot{V}(t)&\leq 2\nu(t)\Vert z_1(t)-z_2(t)\Vert +\sqrt{2}\left( d_{r(T)}(f_1,f_2)(t)+\int_{T_0}^tD_{r(T)}(g_1,g_2)(t,s)ds\right)\sqrt{V(t)}\\
&+2\left(\frac{\nu(t)}{\rho}+\kappa_{r(T)}\right)V(t)+2\sqrt{V(t)}\int_{T_0}^t \mu_{r(T)}(s)\sqrt{V(s)}ds.
\end{aligned}
\end{equation}
Therefore, by the enhanced Gronwall's inequality II (Theorem \ref{TeoII}), we obtain that for all $t\in I$, one has
\begin{equation*}
\begin{aligned}
V(t)\leq V(T_0)\exp\left(\int_{T_0}^t (\delta(s)+\Delta(s))ds\right)+\int_{T_0}^t (\varepsilon(s)+\Delta(s))\exp\left(\int_s^t (\delta(\tau)+\Delta(\tau))d\tau\right)ds,
\end{aligned}
\end{equation*}
which proves the result.
\end{proof}
\noindent It is worth noting that when $z_1$ and $z_2$ are equal, the conclusions of the above theorem can be improved.
\begin{Theorem} Assume that \ref{HC} holds. Let $x_0^1, x_0^2\in C(T_0)$ and $z\in \mathcal{B}_R$. Suppose that  $f_1$, $f_2$, and $g_1$, $g_2$ satisfy \ref{Hf} and \ref{Hg}, respectively. Let $x_i$ be the unique solution of $\mathcal{SP}(x_0^i,z,f_i,g_i)$ for $i=1,2$. Then, for all $t\in I$, one has
\begin{equation*}
\begin{aligned}
\Vert x_1(t)-x_2(t)\Vert \leq \Vert x_0^1-x_0^2\Vert \exp\left(\int_{T_0}^t \delta(s)ds\right)+\int_{T_0}^t \Delta(s)\exp\left(\int_s^t \delta(\tau)d\tau\right)ds,
\end{aligned}
\end{equation*}
where 
\begin{equation*}
\begin{aligned}
\Delta(t)&:=\sqrt{2} d_{r(T)}(f_1,f_2)(t)+\sqrt{2}\int_{T_0}^tD_{r(T)}(g_1,g_2)(t,s)ds,\\
\delta(t)&:=2\frac{\nu(t)}{\rho}+2\kappa_{r(T)}(t)+2\int_{T_0}^t \mu_{r(T)}(s)ds,\\
\nu(t)&:=\frac{\gamma(t)}{2}(1+r(t))+\vert \dot{v}(t)\vert +R(t).
\end{aligned}
\end{equation*}
\end{Theorem}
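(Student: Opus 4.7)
The plan is to follow the proof of Theorem \ref{Well-posedness}, exploiting the identity $z_1 \equiv z_2$ in two ways: the inner product term $\langle z_1(t) - z_2(t), \cdot\rangle$ that produced the forcing contribution $\varepsilon$ there vanishes, and the resulting energy inequality can be linearized via Lemma \ref{lemma-derivative}, so that the Enhanced Gronwall Inequality I (Theorem \ref{TeoI}) applies directly to $W(t) := \|x_1(t) - x_2(t)\|$. This bypasses the square-root passage of Theorem \ref{TeoII} and produces the claimed bound on $\|x_1(t) - x_2(t)\|$ with $\Delta$ outside the exponential.

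Concretely, set $V(t) := \frac{1}{2}\|x_1(t) - x_2(t)\|^2$ and $h_i(t) := f_i(t, x_i(t)) + \int_{T_0}^t g_i(t,s,x_i(s))ds$. By Theorem \ref{Existence}, $\|\dot{x}_i(t) - h_i(t)\| \leq \nu(t)$ a.e., so $(\dot{x}_i(t) - h_i(t))/\nu(t) \in -N_{C(t)}(x_i(t) - z(t)) \cap \mathbb{B}$. Since $z_1 = z_2 = z$, one has $x_1(t) - x_2(t) = (x_1(t) - z(t)) - (x_2(t) - z(t))$, and the hypomonotonicity of $N_{C(t)}(\cdot)\cap\mathbb{B}$ with modulus $1/\rho$ from Proposition \ref{prox-regularity} gives
$$
\dot{V}(t) \leq \frac{\nu(t)}{\rho}\|x_1(t) - x_2(t)\|^2 + \|x_1(t) - x_2(t)\|\cdot\|h_1(t) - h_2(t)\| \quad \text{a.e. } t\in I,
$$
from which Lemma \ref{lemma-derivative} yields $\dot{W}(t) \leq \frac{\nu(t)}{\rho} W(t) + \|h_1(t) - h_2(t)\|$.

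Using \ref{Hf}--\ref{Hg} and the uniform bound $\|x_i(t)\| \leq r(T)$ from Theorem \ref{Existence}, one estimates
$$
\|h_1(t) - h_2(t)\| \leq \varepsilon_0(t) + \kappa_{r(T)}(t)\, W(t) + \int_{T_0}^t \mu_{r(T)}(s)\, W(s)\, ds,
$$
where $\varepsilon_0(t) := d_{r(T)}(f_1,f_2)(t) + \int_{T_0}^t D_{r(T)}(g_1,g_2)(t,s)ds$. Thus $W$ satisfies the hypothesis of Theorem \ref{TeoI} with $K_1 := \nu/\rho + \kappa_{r(T)}$, $K_2 \equiv 1$, $K_3(t,s) := \mu_{r(T)}(s)$, and exponent coefficient $\gamma_0(t) := \nu(t)/\rho + \kappa_{r(T)}(t) + \int_{T_0}^t \mu_{r(T)}(s)ds$. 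Since $\gamma_0 \leq \delta$ and $\varepsilon_0 \leq \Delta$ pointwise, the conclusion of Theorem \ref{TeoI} is dominated by the stated bound, and the result follows. The only substantive change from the proof of Theorem \ref{Well-posedness} is the hypomonotonicity step performed with a common $z$: this is precisely what eliminates the $\|z_1 - z_2\|$ term, leaves $\|h_1 - h_2\|$ as the sole source of discrepancy, and thereby enables the linear Gronwall treatment on $W$ itself.
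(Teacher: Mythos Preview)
Your proof is correct and follows essentially the same approach as the paper: derive the energy inequality for $V=\tfrac12\|x_1-x_2\|^2$ from hypomonotonicity (with the $z$-term absent since $z_1=z_2$), linearize via Lemma~\ref{lemma-derivative}, and apply Theorem~\ref{TeoI}. The only cosmetic differences are that the paper sets $W=\sqrt{V}$ and substitutes the $\|h_1-h_2\|$ estimate \emph{before} invoking Lemma~\ref{lemma-derivative}, whereas you set $W=\|x_1-x_2\|$ and substitute afterward; your route in fact yields the sharper constants $\gamma_0=\delta/2$ and $\varepsilon_0=\Delta/\sqrt{2}$, which you then relax to match the statement.
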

\begin{proof}
From \eqref{inequality-V}, we obtain that 
\begin{equation*}
\begin{aligned}
\dot{V}(t)&\leq \sqrt{2}\left( d_{r(T)}(f_1,f_2)(t)+\int_{T_0}^tD_{r(T)}(g_1,g_2)(t,s)ds\right)\sqrt{V(t)}\\
&+2\left(\frac{\nu(t)}{\rho}+\kappa_{r(T)}\right)V(t)+2\sqrt{V(t)}\int_{T_0}^t \mu_{r(T)}(s)\sqrt{V(s)}ds.
\end{aligned}
\end{equation*}
Hence, by virtue of Lemma \ref{lemma-derivative},  if we consider $W(t):=\sqrt{V(t)}$, then one has
\begin{equation*}\label{inequality-V2}
\begin{aligned}
\dot{W}(t)&\leq \sqrt{2}\left( d_{r(T)}(f_1,f_2)(t)+\int_{T_0}^tD_{r(T)}(g_1,g_2)(t,s)ds\right)\\
&+2\left(\frac{\nu(t)}{\rho}+\kappa_{r(T)}\right)W(t)+2\int_{T_0}^t \mu_{r(T)}(s)W(s)ds.
\end{aligned}
\end{equation*}
Therefore, by virtue of the enhanced Gronwall's inequality I (Theorem \ref{TeoI} ), we obtain that
$$
W(t) \leq W(T_0) \exp\left(\int_{T_0}^t \delta(s)ds\right)+\int_{T_0}^t \Delta(s)\exp\left(\int_s^t \delta(\tau)d\tau\right)ds,
$$
which proves the result. 
\end{proof}

\section{Integro-differential projected dynamical systems}

Let $\H$ be a real Hilbert space and consider the following integro-differential equation:
\begin{equation}\label{Integro-diff}
\left\{
\begin{aligned}
\dot{x}(t)&=f(t,x(t))+\int_{T_0}^t g(t,s,x(s))ds &  \textrm{ a.e. } t\in I,\\
x(T_0)&=x_0,
\end{aligned}
\right.
\end{equation}
where $f\colon I\times \H \to \H$ and  $g\colon I\times I\times \H \to \H$ are functions satisfying \ref{Hf} and \ref{Hg}, respectively. According to Theorem \ref{Existence}, the above problem has a unique absolutely continuous solution $x\colon I \to \H$. Moreover, Theorem \ref{Well-posedness} provides a quantified continuous dependence result for \eqref{Integro-diff}.  Hence, the problem \eqref{Integro-diff} is well-posed. 

Given a nonempty and  closed set $C\subset \H$ and $x_0\in C$, the viability problem for \eqref{Integro-diff} related to $C$ consists in providing necessary and sufficient conditions on $C$, $f$ and $g$ in order that the unique solution $x(\cdot)$ of  \eqref{Integro-diff} stays on the set $C$, that is, $x(t)\in C$ for all $t\in I$. For classical differential equations ($g\equiv 0$), the first result in this sense goes back to Nagumo \cite{MR15180} who proved  that a necessary and sufficient condition for viability is that the right-hand side of the differential equation belongs to the tangent cone of the viability set $C$.  Since then, a well-developed viability theory has been developed for differential inclusions, including the case of differential equations. We refer to \cite{MR1389140,MR2815180} for more details. 
 However, viability theory for integro-differential equations seems inadequate due to the memory effects induced by the integral term on \eqref{Integro-diff}. Indeed, roughly speaking, a tangential condition for the viability result would require that the right-hand side of \eqref{Integro-diff} belongs to the tangent cone of $C$, involving the solutions of the dynamical system, which is impractical from a useful point of view. One way to overcome this difficulty is to consider a related dynamical system whose solutions naturally stay within the given set $C$. Although this changes the system under study, it can still provide important information about the phenomenon of interest. 
 
 In what follows, we assume that $C$ is a nonempty, closed and convex set and $x_0\in C$. We follow the developments of the theory of \emph{projected dynamical systems} (see, e.g., \cite{MR2187207,MR2185604}) and consider the following integro-differential projected dynamical system:
 \begin{equation}\label{Projected2}
\left\{
\begin{aligned}
\dot{x}(t)&=\operatorname{proj}_{T_C(x(t))}\left(f(t,x(t))+\int_{T_0}^t g(t,s,x(s))ds\right) &  \textrm{ a.e. } t\in I,\\
x(T_0)&=x_0\in C,
\end{aligned}
\right.
\end{equation}
where $T_C(x(t))$ denotes the tangent cone of the set $C$ at $x(t)$. When $g\equiv 0$ the above system is called  projected dynamical system  and have been used to model several equilibrium problems (see \cite{MR2187207} and the references therein).  Before establishing the well-posedness for the above dynamical system, we provide the equivalence of \eqref{Projected2} with the following integro-differential sweeping process of Volterra type:
 \begin{equation}\label{SP-Proj}
\left\{
\begin{aligned}
\dot{x}(t)&\in -N_C(x(t))+f(t,x(t))+\int_{T_0}^t g(t,s,x(s))ds &  \textrm{ a.e. } t\in I,\\
x(T_0)&=x_0\in C.
\end{aligned}
\right.
\end{equation}
The following result follows in the same way as \cite[Corollary~2]{MR2185604}
\begin{proposition}\label{rela} Let $C$ be a nonempty, closed and convex set. Then, any solution of \eqref{Projected2} is a solution of \eqref{SP-Proj}. Reciprocally, any slow solution of \eqref{SP-Proj} is a solution of \eqref{Projected2}.
\end{proposition}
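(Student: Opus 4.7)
The plan is to use Moreau's decomposition, which converts the projection onto $T_C(x)$ into the complementary projection onto $N_C(x)$. Recall that when $C$ is a nonempty, closed and convex set and $x\in C$, the tangent cone $T_C(x)$ and the normal cone $N_C(x)$ are mutually polar closed convex cones, so by Moreau's decomposition every $u\in\H$ admits the unique orthogonal decomposition
\begin{equation*}
u = \operatorname{proj}_{T_C(x)}(u) + \operatorname{proj}_{N_C(x)}(u).
\end{equation*}
In particular, $\operatorname{proj}_{T_C(x)}(u) = u-\operatorname{proj}_{N_C(x)}(u)\in u-N_C(x)$, and $\operatorname{proj}_{N_C(x)}(u)$ is the unique element $\nu\in N_C(x)$ minimizing $\|u-\nu\|$. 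This is the only nontrivial ingredient needed.

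For the first implication, let $x$ be a solution of \eqref{Projected2} and set $h(t):=f(t,x(t))+\int_{T_0}^t g(t,s,x(s))\,ds$. By hypothesis $x(T_0)=x_0\in C$ and $x(t)\in C$ for all $t\in I$ (this is implicit in the definition of a solution of \eqref{Projected2}, since $T_C(x(t))$ requires $x(t)\in C$; it is also the standard viability consequence of the tangential nature of the dynamics). Applying Moreau's decomposition to $u=h(t)$ and using $\dot{x}(t)=\operatorname{proj}_{T_C(x(t))}(h(t))$, we obtain $h(t)-\dot{x}(t)=\operatorname{proj}_{N_C(x(t))}(h(t))\in N_C(x(t))$, that is, $\dot{x}(t)\in -N_C(x(t))+h(t)$ for a.e. $t\in I$, so $x$ solves \eqref{SP-Proj}.

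For the converse, assume $x$ is a slow solution of \eqref{SP-Proj}. Then $x(T_0)=x_0\in C$ and
\begin{equation*}
\dot{x}(t) = -\bar\nu(t)+h(t), \qquad \bar\nu(t)=\operatorname{argmin}_{\nu\in N_C(x(t))}\|h(t)-\nu\|,
\end{equation*}
with $h$ as above. Since $N_C(x(t))$ is a nonempty closed convex cone, the argmin is exactly $\operatorname{proj}_{N_C(x(t))}(h(t))$, so invoking Moreau's decomposition once more yields
\begin{equation*}
\dot{x}(t) = h(t)-\operatorname{proj}_{N_C(x(t))}(h(t)) = \operatorname{proj}_{T_C(x(t))}(h(t)),
\end{equation*}
and $x$ satisfies \eqref{Projected2}. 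The only step that deserves explicit justification is the identification of $\bar\nu(t)$ with the metric projection onto $N_C(x(t))$, which follows from the standard characterization of projections onto nonempty closed convex sets as unique minimizers of distance; the rest is the convex-analytic duality between $T_C$ and $N_C$, so no serious obstacle is anticipated.
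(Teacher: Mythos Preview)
Your proof is correct. The paper does not actually supply its own argument---it simply records that the proposition ``follows in the same way as \cite[Corollary~2]{MR2185604}''---and the standard proof of that corollary is precisely the Moreau decomposition argument you wrote out, so your approach coincides with what the paper has in mind.
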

It is worth emphasizing that due to the presence of the normal cone, any solution of \ref{SP-Proj} satisfies the constraint $x(t)\in C$.  Now, we are in position to prove the existence of solutions for \eqref{Projected2}.
\begin{proposition}
Let $C$ be a nonempty, closed and convex set and assume that \ref{Hf} and \ref{Hg} hold.  Then, for any $x_0\in C$, there exists a unique absolutely continuous solution $x(\cdot)$ of \eqref{Projected2}. Moreover, $x(\cdot)$ is a slow solution of \eqref{SP-M} and the following bounds hold
\begin{equation*}
\begin{aligned}
\Vert x(t)\Vert &\leq r(t):=\Vert x_0\Vert \exp\left( \int_{T_0}^t \gamma(s)ds\right)+\int_{T_0}^t \gamma(s)\exp\left(\int_s^t \gamma(\tau)d\tau \right)ds &\textrm{ for all } t\in I,\\
\Vert \dot{x}(t)\Vert & \leq \theta(t):=\gamma(t)(1+r(t)) &\textrm{ for a.e. } t\in I,
\end{aligned}
\end{equation*}
where $\gamma(t):=2\beta(t)+2\int_{T_0}^t \sigma(t,s)ds$.
\end{proposition}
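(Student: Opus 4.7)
The plan is to reduce the problem to the sweeping process case already handled by Theorem \ref{Existence-2} and to transfer conclusions via Proposition \ref{rela}. Observe first that \eqref{SP-Proj} is precisely the problem \eqref{SP-M} (with $C(t)\equiv C$ fixed convex), so Theorem \ref{Existence-2} applies directly: it produces a unique absolutely continuous $x\colon I\to \H$ solving \eqref{SP-M}, this solution is a slow solution in the sense of the definition given above Theorem \ref{Existence-2}, and it satisfies the bounds for $\|x(t)\|$ and $\|\dot{x}(t)\|$ with the same $r(t)$, $\theta(t)$ and $\gamma(t)$ as in the statement to be proved.

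Next, I would invoke Proposition \ref{rela}: since $x$ is a slow solution of \eqref{SP-Proj}, it is automatically a solution of \eqref{Projected2}. This establishes existence for \eqref{Projected2}, and the bounds carry over without modification because they concern the same trajectory $x$ and its derivative, whose norms are controlled by Theorem \ref{Existence-2}. Note that the assumption $x_0\in C$ is compatible with the initial condition required by \eqref{SP-M}, so no adjustment is needed.

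For uniqueness, I would argue in the reverse direction. Let $\tilde{x}$ be any absolutely continuous solution of \eqref{Projected2}. By Proposition \ref{rela}, $\tilde{x}$ solves \eqref{SP-Proj} (which is \eqref{SP-M}). But Theorem \ref{Existence-2} gives uniqueness for \eqref{SP-M} with the prescribed initial datum $x_0\in C$, whence $\tilde{x}=x$. Thus \eqref{Projected2} admits the unique solution $x$ produced above, and the proof concludes.

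There is essentially no technical obstacle here: all the heavy lifting (existence, uniqueness, slow-solution property, and a priori bounds) is carried out by Theorem \ref{Existence-2}, while Proposition \ref{rela} is precisely the bridge between the sweeping-process formulation and the projected dynamical formulation. The only point that requires care is to verify that the hypotheses \ref{Hf} and \ref{Hg} assumed in the statement match those needed by Theorem \ref{Existence-2} and that $C$ being nonempty, closed and convex fulfills \ref{HC} trivially (with $v\equiv 0$, since $C(t)\equiv C$); once this is checked, the proof is a short two-step invocation.
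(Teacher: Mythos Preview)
Your proposal is correct and follows exactly the paper's own approach: the paper's proof consists of the single line ``It follows directly from Theorem \ref{Existence-2} and Proposition \ref{rela},'' which is precisely the two-step invocation you describe (existence, uniqueness, slow-solution property, and bounds from Theorem \ref{Existence-2}; transfer to \eqref{Projected2} via Proposition \ref{rela}). Your additional remark that \ref{HC} holds trivially with $v\equiv 0$ for a fixed convex $C$ is the only detail one needs to check, and you handle it correctly.
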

\begin{proof} It follows directly from Theorem \ref{Existence-2} and Proposition \ref{rela}.
\end{proof}

\section{Concluding remarks}
In this paper, we have studied a variant of Moreau's sweeping process known as the integro-differential sweeping process of Volterra type. After establishing new enhanced versions of Gronwall's inequality, we provided an existence result for integro-differential sweeping process of Volterra type. Our approach is based on a reparametrization technique and a fixed point argument for history-dependent operators. We observe that the main result from Section \ref{sec2} (Theorem \ref{Existence}) is based on an existence result for the sweeping process due to Edmond and Thibault \cite{MR2179241}, where the prox-regularity and a priori bound on the derivative of the trajectory is provided. Hence, by using the same fixed point argument with a different existence result for Moreau's sweeping process (for example, with truncated bounded retraction), more general results for the integro-differential sweeping process of Volterra type can be obtained.   Another key point of this work is the fully continuous dependence result obtained in Section \ref{sec3}. Indeed, we provide estimates of the dependence of the solutions with respect to all the data of the problem. Again, our analysis is based on new enhanced versions of Gronwall's inequality. 

Several open questions remain. On the one hand, a natural continuation of this work is to consider integro-differential state-dependent sweeping process of Volterra type with nonconvex sets, e.g., submooth moving sets (see, e.g., \cite{MR3626639}). Degenerate versions of integro-differential state-dependent sweeping processes could also be explored  (see, e.g.,  \cite{MR4421900}).

\paragraph{Acknowledgements} This work was supported by the Center for Mathematical Modeling (CMM) and ANID-Chile under BASAL funds for Center of Excellence FB210005, Fondecyt Regular 1200283, Fondecyt Regular 1240120 and Fondecyt Exploración 13220097. We also acknowledge ECOS-Anid Project PC23E11 and MathAmsud Project 23-MATH-17.

\bibliographystyle{abbrv}
\bibliography{ref}
\end{document}